\newtheorem{thm}{Theorem}[section]
\newtheorem{cor}[thm]{Corollary}
\newtheorem{lem}[thm]{Lemma}
\newtheorem{prob}[thm]{Problem}
\newtheorem{obs}[thm]{Observation}
\newtheorem{ques}[thm]{Question}
\newtheorem{prop}[thm]{Proposition}
\newtheorem{defn}[thm]{Definition}
\numberwithin{equation}{section}
\numberwithin{equation}{subsection}
\begin{document}

\title{Total mixed domination in graphs}

\author{$^{1}$Farshad Kazemnejad, $^{2}$Adel P. Kazemi and $^{3}$Somayeh Moradi \\[1em]
$^{1,2}$ Department of Mathematics, \\ University of Mohaghegh Ardabili, \\ P.O.\ Box 5619911367, Ardabil, Iran. \\
$^1$ Email:  kazemnejad.farshad@gmail.com\\
$^2$ Email: adelpkazemi@yahoo.com\\
[1em]
$^3$ Department of Mathematics, \\ School of Sciences, Ilam University, \\
P.O.Box 69315-516, Ilam, Iran. \\
$^3$ Email: somayeh.moradi1@gmail.com \\
}

\maketitle

\begin{abstract}
For a graph $G=(V,E)$, we call a subset $ S\subseteq V \cup E$ a total mixed dominating set of $G$ if each element of $V \cup E$ is either adjacent or incident to an element of $S$, and the total mixed domination number $\gamma_{tm}(G)$ of $G$ is the minimum cardinality of a total mixed dominating set of $G$. In this paper, we initiate to study the total mixed domination number of a connected graph by giving some tight bounds in terms of some parameters such as order and total domination numbers of the graph and its line graph. Then we discuss on the relation between total mixed domination number of a graph and its diameter. Studing of this number in trees is our next work. Also we show that the total mixed domination number of a graph is equale to the total domination number of a graph which is obtained by the graph. Giving the total mixed domination numbers of some special graphs is our last work.
\\[0.2em]

\noindent
Keywords: Total mixed domination, total domination, total graph.
\\[0.2em]

\noindent
MSC(2010): 05C69.
\end{abstract}

\pagestyle{myheadings}
\markboth{\centerline {\scriptsize  F. Kazemnejad, A. P. Kazemi, S. Moradi}}     {\centerline {\scriptsize F. Kazemnejad, A. P. Kazemi, S. Moradi,~~~~~~~~~~~~~~~~~~~~~~~~~~~~~~~~~~~~~~~~~~~~~~~~Total mixed domination in graphs}}


\section{\bf Introduction}

All graphs considered here are non-empty, finite, undirected and simple. For
standard graph theory terminology not given here we refer to \cite{West}. Let $%
G=(V,E) $ be a graph with the \emph{vertex set} $V$ of \emph{order}
$n(G)$ and the \emph{edge set} $E$ of \emph{size} $m(G)$. $N_{G}(v)$ and
$N_{G}[v]$ denote the \emph{open neighborhood} and the \emph{closed neighborhood} of a
vertex $v$, respectively, while $\delta =\delta (G)$ and $\Delta =\Delta (G)$ denote the \emph{minimum} and \emph{maximum degrees} of $G$, respectively. Also we define $N_{E(G)}(v)=\{e\in E(G)~|~v\in e\}$ for any vertex $v$, $N_{G}(e)=\{v\in V(G)~|~v\in e\}$ and $N_{E(G)}(e)=\{e'\in E(G)~|~e \mbox{ and } e' \mbox{ are adjacent} \}$ for any edge $e$, and $N_{T(G)}(x)=N_{G}(x) \cup N_{E(G)}(x)$ for any element $x\in V(G)\cup E(G)$. For two vertices $u$ and $v$ in a connected graph $G$ the \emph{distance} between $u$ and $v$ is the minimum length of a shortest $(u,v)-$path in $G$ and is denoted by $d(u,v)$. The maximum distance among all pairs of vertices of $G$ is the \emph{diameter} of $G$, which is denoted by \emph{diam(G)}. A \emph{Hamiltonian path} in a graph $G$ is a path which contains every vertex of $G$.

\vskip 0.2 true cm

We write $K_{n}$, $C_{n}$ and $P_{n}$ for a \emph{complete graph}, a \emph{cycle} and a \emph{path} of order $n$, respectively, while $G[S]$, $W_n$ and $K_{n_1,n_2,\ldots,n_p}$ denote the subgraph of $G$ \emph{induced} by a subset $S\subseteq V(G)\cup E(G)$ of $G$, a \emph{wheel} of order $n+1$, and a \emph{complete $p$-partite graph}, respectively. The \emph{complement} of a graph $G$, denoted by $\overline{G}$, is a graph with the vertex set $V(G)$ and for every two vertices $v$ and $w$, $vw\in E(\overline{G})$ if and only if $vw\not\in E(G)$. The \emph{line graph} $L(G)$ of $G$ is a graph with the vertex set $E(G)$ and two vertices of $L(G)$ are adjacent when they are incident in $G$.

\vskip 0.2 true cm

Domination in graphs is now well studied in graph theory and the literature
on this subject has been surveyed and detailed in the two books by Haynes,
Hedetniemi, and Slater~\cite{hhs1, hhs2}. A famous type of them is total domination. The literature on the subject on total domination in graphs has been surveyed and detailed in the recent book~\cite{HeYe13} by Henning and Yeo.

\begin{defn}
\emph{A subset $S\subset V$ of a graph $G$ is a} total dominating set, \emph{briefly TDS, of $G$ if each vertex of $V$ is adjacent to a vertex in $S$, and the} total domination number $\gamma_t(G)$ \emph{of $G$ is the minimum cardinality of a total dominating set.}
\end{defn}

Y. Zhao, L. Kang, and M. Y. Sohn in \cite{Y. Zhao}  presented another domination number as follows.

\begin{defn}
\emph{\cite{Y. Zhao} A subset $S\subseteq V\cup E$ of a graph $G$ is a} mixed dominating set, \emph{briefly MDS, of $G$ if each element of $(V\cup E)-S$ is either adjacent or incident to an element of $S$, and the} mixed domination number $\gamma_{m}(G)$ \emph{of $G$ is the minimum cardinality of a mixed dominating set.}
\end{defn}

Here, we initiate studying of total mixed domination in graphs that is a generalization of mixed domination by adding the concept of total in the following meaning.

\begin{defn}
\emph{A subset $S\subseteq V\cup E$ of a graph $G$ with $\delta(G)\geq 1$ is a} total mixed dominating set, \emph{briefly TMDS, of $G$ if each element of $V \cup E$ is either adjacent or incident to an element of $S$, and the} total mixed domination number $\gamma_{tm}(G)$ \emph{of $G$ is the minimum cardinality of a total mixed dominating set.}
\end{defn}


\vskip 0.2 true cm

The goal of this paper is to initiate studying of total mixed domination number of a graph. First in section 2, we give some tight lower and upper bounds for the total mixed domination number of a connected graph in terms of some parameters such as the order of the graph or the total domination numbers of the graph and its line graph. Also we discuss on the relation between the total mixed domination number of a graph with its diameter. Studing of total mixed domination number of trees is our next work. Also, we show that the total mixed domination number of a graph is equale to the total domination number of a graph which is obtained by the graph, named total graph. Finally in last section, we will calculate the total mixed domination number of special classes of graphs including paths, cycles, complete bipartite graphs, complete graphs and wheels. 

\vskip 0.2 true cm
Here, we fix a notation for the vertex set, the edge set and open neighbrhood of a graph which are used thorough this paper. For a graph $G$ with the vertex set $V=\{v_i|\ 1\leq i\leq n\}$, $\mathbb{E}(G)$ or simply $\mathbb{E}$ denotes the edge set of $G$ in which an edge $v_{i}v_{j}$ is denoted by $e_{ij}$. Then $V(L(G))=\mathbb{E}$, and the edge set of $L(G)$ is the set $\{e_{ij}e_{ik}~|~ e_{ij}, e_{ik}\in \mathbb{E}\}$.  A min-TDS/ min-TMDS of $G$ denotes a TDS/ TMDS of $G$ with minimum cardinality. Also we agree that \emph{a vertex $v$ dominates an edge} $e$ or \emph{an edge $e$ dominates a vertex} $v$ mean $v\in e$. Similarly, we agree that \emph{an edge dominates another edge} means they have a common vertex.

\section{\bf Main results}

\subsection{\bf Some general bounds}
Here, we give some tight bounds for the total mixed domination number of a connected graph in terms of some parameters such as order of the graph or the total domination numbers of the graph and its line graph. Also we discuss on the relation between the total mixed domination number of  a graph  and its diameter. First an observation.
\begin{obs}
\label{SG,SL}
Let $G$ be a graph with the vertex set $V=\{v_i~|~1\leq i\leq n\}$ and $\delta(G)\geq 1$.
\\$\bullet$ A subset $S\subseteq \mathbb{E}$ is a TDS of $L(G)$ if and only if $\{i,j~|~e_{ij}\in \mathbb{E} \}\cap \{i,j~|~e_{ij}\in S\}\neq \emptyset$.\\
$\bullet$ A TDS $S$ of $L(G)$ is a TMDS of $G$ if and only if $\{i,j~|~e_{ij}\in S\}=\{1,2,\cdots, n\}$.\\
$\bullet$ A TDS $S$ of $G$ is a TMDS of $G$ if and only if $\overline{S}$ is independent in $G$.
\end{obs}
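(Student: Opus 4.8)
The plan is to prove each of the three equivalences by unwinding the relevant definitions while keeping careful track of which domination relations are actually available. Throughout I would write $I(S)=\bigcup_{e_{kl}\in S}\{k,l\}$ for the set of vertex indices incident with some edge of $S$, and recall that two vertices $e_{ij},e_{kl}$ of $L(G)$ are adjacent precisely when $e_{ij}\neq e_{kl}$ and $\{i,j\}\cap\{k,l\}\neq\emptyset$. For the first bullet I would argue directly from the definition of a TDS: $S\subseteq\mathbb{E}$ totally dominates $L(G)$ iff every vertex $e_{ij}$ of $L(G)$ has a neighbour in $S$, i.e.\ iff for each $e_{ij}\in\mathbb{E}$ there is an edge $e_{kl}\in S$ with $e_{kl}\neq e_{ij}$ and $\{k,l\}\cap\{i,j\}\neq\emptyset$; this is exactly the asserted condition that $\{i,j\}$ meet the index set of $S$. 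The one point requiring care here is the word \emph{total}: the dominating edge must be \emph{distinct} from $e_{ij}$, so when $e_{ij}\in S$ the intersection has to be realised by another edge of $S$, and I would phrase the equivalence so that this distinctness is respected.

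For the second bullet I would start from the hypothesis that $S\subseteq\mathbb{E}$ is already a TDS of $L(G)$ and ask which additional elements of $V\cup E$ it might fail to dominate as a subset of $G$. Since every element of $S$ is an edge, an edge $e_{ij}$ of $G$ can be dominated only by an edge of $S$ sharing a vertex with it, and by the first bullet this happens for every edge exactly because $S$ is a TDS of $L(G)$; thus all edges are automatically dominated. The only remaining elements are the vertices, and a vertex $v_k$ can be dominated by an element of $S$ only through incidence, i.e.\ only if $k\in I(S)$. Hence $S$ is a TMDS of $G$ iff every vertex index lies in $I(S)$, that is iff $\{i,j\mid e_{ij}\in S\}=\{1,2,\dots,n\}$, which is the claim.

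For the third bullet I would run the symmetric argument with $S\subseteq V$ a TDS of $G$. Because every vertex of $G$ already has a neighbour in $S$, all vertices are dominated, so the total-mixed condition reduces to dominating the edges. As $S$ consists only of vertices, an edge $v_av_b$ is dominated solely by incidence, i.e.\ iff $\{v_a,v_b\}\cap S\neq\emptyset$; thus $S$ is a TMDS iff every edge has an endpoint in $S$, i.e.\ iff $S$ is a vertex cover of $G$. Finally I would invoke the standard complementary fact that a vertex set is a vertex cover exactly when its complement contains no edge, which yields that $S$ is a TMDS iff $\overline{S}$ is independent in $G$. I expect no serious obstacle in any of the three parts; the only thing demanding attention is to distinguish the two domination mechanisms available to a set, namely adjacency versus incidence, according to whether $S$ sits in $\mathbb{E}$ or in $V$, together with the distinctness forced by totality in the first bullet.
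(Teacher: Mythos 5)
Your proof is correct and is essentially the intended argument: the paper states this Observation without proof, treating it as immediate from the definitions, and your unwinding (adjacency in $L(G)$ for dominating edges, incidence for dominating vertices, and the vertex-cover/independent-set complementarity in the third bullet) is exactly that justification. Your insistence on distinctness in the first bullet is in fact a necessary correction to the paper's loose phrasing: for $G=P_5$ with $S=\{e_{12},e_{45}\}$, every edge $e_{ij}$ of $G$ has $\{i,j\}$ meeting the index set of $S$, yet $e_{12}$ has no neighbour in $S$ inside $L(G)$, so $S$ is not a TDS; the equivalence holds only when, as you require, the dominating edge is distinct from the dominated one.
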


\begin{thm}
\label{max{gama_t  G, gama_t  L(G)} =< gama_t T(G)=<gama_t  G+gama_t  L(G)}
Let $G$ be a connected graph with $\delta(G)\geq 1$. Then
 \[
\max\{\gamma_t  (G), \gamma_t  (L(G))\}  \leq \gamma_{tm}(G) \leq \gamma_t  (L(G)) +\gamma_t  (G),
\]
and the bounds are tight.
\end{thm}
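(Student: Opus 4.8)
The plan is to establish the two displayed inequalities separately and then to discuss tightness. For the upper bound, I would take a minimum TDS $D_1\subseteq V$ of $G$ and a minimum TDS $D_2\subseteq \mathbb{E}$ of $L(G)$ (which exists provided $L(G)$ has no isolated vertex, i.e.\ $G\neq K_2$; I would record this as the standing hypothesis under which the line-graph term makes sense) and show that $S=D_1\cup D_2\subseteq V\cup E$ is a TMDS of $G$. Indeed, every vertex has a neighbour in $D_1$ since $D_1$ totally dominates $G$, and every edge shares a vertex with some edge of $D_2$ since $D_2$ totally dominates $L(G)$; by the conventions fixed just before the theorem these are precisely the two ways in which an element of $V\cup E$ is dominated. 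Hence $\gamma_{tm}(G)\le|D_1|+|D_2|=\gamma_t(G)+\gamma_t(L(G))$.

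For the lower bound it suffices to prove $\gamma_t(G)\le\gamma_{tm}(G)$ and $\gamma_t(L(G))\le\gamma_{tm}(G)$. Fix a minimum TMDS $S$ and set $S_V=S\cap V$, $S_E=S\cap E$. For the first inequality I would build a vertex set $D=S_V\cup\{r(e)\mid e\in S_E\}$, where $r(e)$ is a single chosen endpoint of $e$, so that $|D|\le|S_V|+|S_E|=\gamma_{tm}(G)$, and then choose the representatives $r(e)$ so that $D$ becomes a TDS of $G$. A vertex dominated in $S$ by a vertex retains a neighbour in $S_V\subseteq D$, while a vertex $v$ dominated only through an incident edge $e=vw\in S_E$ acquires the neighbour $r(e)$ exactly when $r(e)=w$; thus the whole task reduces to orienting the subgraph $F$ spanned by $S_E$ (reading $r(e)$ as the tail) so that every vertex relying on $F$ for its domination receives an in-edge. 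The second inequality is dual: I would keep $S_E$ and adjoin, for each $u\in S_V$, one edge incident to $u$ (available since $\delta(G)\ge1$), obtaining an edge set of size at most $\gamma_{tm}(G)$, and verify via Observation~\ref{SG,SL} that it totally dominates $L(G)$, since every edge already shares a vertex with a chosen edge and the selected edges can be arranged to have neighbours in the set.

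The main obstacle in both directions is the \emph{total} requirement: the naive, always-valid choice (both endpoints of each edge, or all edges incident to each vertex) has size up to $|S_V|+2|S_E|$ and must be trimmed to one element per object without creating a selected element that has no neighbour in the set. This forces a short case analysis on the components of $F$: a component containing a cycle can be oriented so that every vertex has in-degree at least one, whereas tree, star, and isolated-edge components require a local swap of a single representative, using that an isolated edge of $F$ is itself dominated in $S$ only through an incident vertex of $S_V$, so one of its endpoints already lies in $D$.

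Finally, for tightness I would exhibit explicit graphs attaining each bound. The lower bound is attained whenever $G$ has a minimum TDS whose complement is independent: by the third part of Observation~\ref{SG,SL} such a set is already a TMDS, whence $\gamma_{tm}(G)=\gamma_t(G)=\max\{\gamma_t(G),\gamma_t(L(G))\}$ (short paths and suitable spiders are convenient instances). Attaining the upper bound is the more delicate case, in which vertex- and edge-domination cannot be merged; here I would confirm a small witnessing graph by direct inspection or read the equality off the exact values computed in Section~3.
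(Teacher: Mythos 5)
Your overall route is the same as the paper's: the upper bound via the union of a TDS of $G$ and a TDS of $L(G)$, and the lower bound by converting a minimum TMDS $S$ into a TDS of $G$ and a TDS of $L(G)$ of cardinality at most $|S|$. The genuine gap is in the conversion. The reduction you state --- orient the subgraph $F$ spanned by $S\cap\mathbb{E}$ so that every vertex relying on $F$ receives an in-edge --- is strictly stronger than what is needed and is unachievable in exactly the case you flag as problematic: a tree component of $F$ with $k$ vertices has only $k-1$ edges, so if all $k$ of its vertices rely on $F$ some vertex must have in-degree $0$. This situation really occurs (take $K_4$ with the min-TMDS $\{e_{12},e_{23},e_{34}\}$, where $S\cap V=\emptyset$ and every vertex relies on $F$), and your proposed repair covers only isolated edges, where the fact that the edge itself must be dominated forces an endpoint into $S\cap V$; for a tree component with two or more edges that fact is simply false, and ``a local swap of a single representative'' is not an argument. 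What rescues the construction is that a vertex needs only \emph{some} neighbour in $D$, not an in-edge from an incident $F$-edge: for instance, root each tree component on at least three vertices at a leaf, orient every edge away from the root and take tails as representatives; then each non-root vertex is dominated by its parent, and the root is dominated by its unique neighbour, which has a child and is therefore itself a tail. You must supply this (or an equivalent) argument, together with its analogue in the $L(G)$ direction, where the corresponding obstruction is a degree-one vertex of $S\cap V$ whose unique incident edge is forced to serve as its own representative and then still needs an adjacent edge in the constructed set.

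The second gap concerns tightness of the upper bound, which is part of the statement. Your plan to ``read the equality off the exact values computed in Section~3'' does not work: for every family treated there ($P_n$, $C_n$, $K_{m,n}$, $K_n$, $W_n$) one checks that $\gamma_t(G)+\gamma_t(L(G))$ strictly exceeds $\gamma_{tm}(G)$ (e.g.\ $\gamma_{tm}(W_n)=\lceil n/2\rceil+1$ while $\gamma_t(W_n)+\gamma_t(L(W_n))=\lceil n/2\rceil+2$). The paper has to construct a specific ten-vertex graph (two isomorphic gadgets glued together, Figure~\ref{fi:proofexample1}) with $\gamma_t(G)=2$, $\gamma_t(L(G))=4$, and then prove $\gamma_{tm}(G)=6$ by a case analysis showing that each gadget must contribute at least three elements to any TMDS; ``direct inspection of a small witness'' hides both the choice of witness and this nontrivial lower-bound verification. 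Your treatment of the lower bound's tightness via the third bullet of Observation~\ref{SG,SL} is fine and matches Corollary~\ref{gama_t  G = gama_t T(G)}, and your remark that the $\gamma_t(L(G))$ term is ill-defined for $G=K_2$ is a legitimate point the paper glosses over.
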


\begin{proof}
Let $G$ be a connected graph with the vertex set $V=\{v_{1},\ldots,v_{n}\}$ and edge set $\mathbb{E}$ in which $e_{ij}$ denotes edge $v_{i}v_{j}$. Then $V(L(G))=\mathbb{E}$ and $E(L(G))=\{e_{ij} e_{i^{'}j^{'}}~|~\{i,j\} \cap \{i^{'},j^{'}\} \neq \emptyset\}$. Since the union of a TDS of $G$ and a TDS of $L(G)$ is a TMDS of $G$, we have $\gamma_{tm}(G) \leq \gamma_t  (L(G)) +\gamma_t  (G)$. To prove the lower bound, let $S$ be a min-TMDS of $G$. If either every vertex of $V$ is dominated by a vertex in $S\cap V$, or every edge of $\mathbb{E}$ is dominated by an edge of $S \cap \mathbb{E}$, then $S\setminus \mathbb{E}$ or $S\setminus V$ is a TDS of $G$ or $L(G)$, respectively, and there is nothing to prove. Otherwise,
\begin{equation*}\label{eqq1}
\begin{array}{lll}
S'_G &= & (S\setminus \mathbb{E})\cup \{v_i~|~v_i \mbox{ is adjacent to some } v_j\in S \mbox{ such that } N_{T(G)}(v_j) \cap S\subseteq \mathbb{E}\}\\
      & \cup & \{v_{i}, v_{j}~|~ e_{ij}, e_{jk} \in S \mbox{ but } v_{j}, v_{k} \notin S\}
\end{array}
\end{equation*}
is a TDS of $G$ with cardinality at most $|S|$. Since also by changing the roles of $G$ and $L(G)$ we may obtain a TDS $S'_L$ of $L(G)$  with cardinality at most $|S|$, we have proved 
\[
\max \{ \gamma_t  (G), \gamma_t  (L(G))\}  \leq \max \{|S'_{G}|,|S'_L|\} \leq |S|=\gamma_{tm}(G).
\]

The lower bound is tight for the complete graphs $K_{3n}$ by Propositions \ref{gamma_t T(K_n)} and \ref{gama_t(L(K_n))=2n/3} when $\max \{ \gamma_t  (G), \gamma_t  (L(G))\}=\gamma_t  (L(G))$. The case $\max \{ \gamma_t  (G), \gamma_t  (L(G))\}=\gamma_t  (G)=\gamma_{tm} (G)$ is discussed in Corollary \ref{gama_t  G = gama_t T(G)}. To show that the upper bound is tight, consider the graph $G$ illustrated in Figure \ref{fi:proofexample1} with $\gamma_t  (G)=2$ (because $\{v_1,v_5\}$ is a min-TDS) and $\gamma_t  (L(G))=4$ by Observation \ref{SG,SL} (because $\{e_{12},e_{23},e_{56},e_{67}\}$ is a TDS of $L(G)$ and for any set $\{e_{ij},e_{jk},e_{k\ell}\}$ we have $\{i,j,k,\ell\}\neq \{0,1,\cdots 9\}$). So it is sufficient to prove $\gamma_{tm}(G)=6$. First since $\{v_1,v_2,v_3,v_5,v_6,v_7\}$ is a TMDS of $G$, we have $\gamma_{tm}(G)\leq 6$. Let $G_1$ and $G_2$ be the subgraphs of $G$ induced by $\{v_i~|~ 0\leq i\leq 4\}$ and $\{v_i~|~ 5\leq i\leq 9\}$, respectively, which are isomorphic together (see Figure \ref{fi:proofexample2}). And let also $S$ be a TMDS of $G$ such that $|S\cap (V(G_2) \cup \mathbb{E}(G_{2})) |\geq |S\cap (V(G_1) \cup \mathbb{E}(G_{1}))|\geq 2$. By the contrary, let $|S\cap (V(G_1) \cup \mathbb{E}(G_{1}))|= 2$. Since $N_{T(G)}(v_0)\cap S\subseteq \{v_1,e_{01}\}$, we have $S\cap (V(G_1) \cup \mathbb{E}(G_{1}))=\{e_{01},w\}$ for some $w\in \{v_1,v_0,e_{1j}~|~ 2\leq j\leq 4\}$ or $S\cap (V(G_1) \cup \mathbb{E}(G_{1}))=\{v_1,w\}$ for some $w\in \{v_j ,e_{1j}~|~ 2\leq j\leq 4\}\cup\{v_0,e_{01}\}$. So $S\cap (V(G_1) \cup \mathbb{E}(G_{1}))$ is one the sets $\{e_{01},v_j\}$ for some $j=0,1$, or  $\{e_{01},e_{1j}\}$ for some $j=2,3,4$, or $\{v_1,v_{j}\}$ for some $j=0,2,3,4$, or $\{v_1,e_{1j}\}$ for some $j=0,2,3,4$. Since in each case $N_{T(G)}(e_{k\ell})\cap S=\emptyset$ for some $k,\ell \in \{2,3,4\}-\{j\}$, we conclude $|S\cap (V(G_1) \cup \mathbb{E}(G_{1}))|\geq 3$ and so  $\gamma_{tm}(G)= 6$.
\end{proof}

\begin{figure}[ht]
\centerline{\includegraphics[width=12.8cm, height=4.5cm]{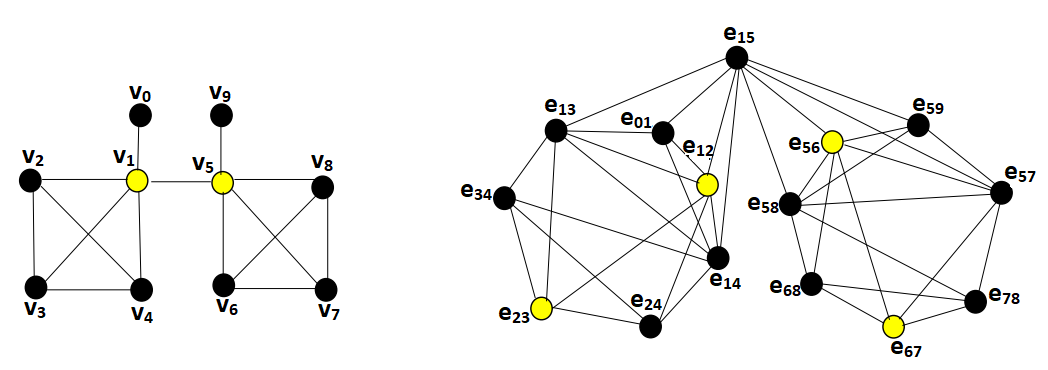}}
\vspace*{-0.3cm}
\caption{ The illustration of $G$ (left) and $L(G)$ (right).}\label{fi:proofexample1}
\end{figure}
\begin{figure}[ht]
\centerline{\includegraphics[width=6.3cm, height=4cm]{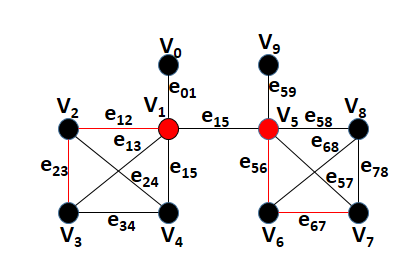}}
\vspace*{-0.3cm}
\caption{A min-TMDS of the graph $G$.}\label{fi:proofexample2}
\end{figure}


Corollary \ref{gama_t  G = gama_t T(G)}, which is obtained by Observation \ref{SG,SL}, shows that the lower bound in Theorem \ref{max{gama_t  G, gama_t  L(G)} =< gama_t T(G)=<gama_t  G+gama_t  L(G)} is tight for the case $\max \{ \gamma_t  (G), \gamma_t  (L(G))\}=\gamma_t  (G)=\gamma_{tm} (G)$. For example, for any complete bipartite graph $G=K_{1,n}$ and any double star graph $G=S_{1,n,n}$, $ \gamma_t(G) =\gamma_{tm}(G)$ (recall that a \emph{double star graph} $S_{1,n,n}$ is obtained from the complete bipartite graph $K_{1,n}$ by replacing every edge  by a path of length 2). For an example, the set of yellow points $\{v_{0}, v_{1}, v_{2}, v_{3}\}$ in Figure \ref{fi:proofexample3} is a min-TDS and a min-TMDS of $S_{1,3,3}$.
\begin{figure}[ht]
\centerline{\includegraphics[width=4cm, height=3cm]{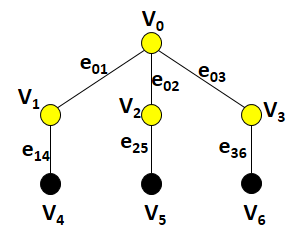}}
\vspace*{-0.3cm}
\caption{A min-TDS and a min-TMDS of the double star $S_{1,3,3}$. \label{fi:proofexample3}}
\end{figure}
\begin{cor}
\label{gama_t  G = gama_t T(G)}
For any graph $G$ which has a min-TDS such that its complement is an independent set, $ \gamma_t(G) =\gamma_{tm}(G)$.
\end{cor}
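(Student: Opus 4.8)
The plan is to obtain $\gamma_{tm}(G)=\gamma_t(G)$ by squeezing it between two matching inequalities. One direction is free: Theorem~\ref{max{gama_t  G, gama_t  L(G)} =< gama_t T(G)=<gama_t  G+gama_t  L(G)} already gives $\gamma_t(G)\le\max\{\gamma_t(G),\gamma_t(L(G))\}\le\gamma_{tm}(G)$, so the whole task reduces to exhibiting a total mixed dominating set of cardinality $\gamma_t(G)$.

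For the reverse inequality I would invoke the hypothesis directly. Let $S$ be a min-TDS of $G$ whose complement $\overline{S}=V\setminus S$ is an independent set, so that $|S|=\gamma_t(G)$. The point is that the third bullet of Observation~\ref{SG,SL} characterizes exactly when a total dominating set of $G$ is already a total mixed dominating set of $G$, namely precisely when its complement is independent. Since $S$ meets this criterion, $S$ is a TMDS of $G$ (it is a legitimate candidate because $S\subseteq V\subseteq V\cup E$), and therefore $\gamma_{tm}(G)\le|S|=\gamma_t(G)$.

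Combining the two bounds yields $\gamma_{tm}(G)=\gamma_t(G)$, as claimed. There is no real obstacle here: the entire content is carried by the equivalence recorded in Observation~\ref{SG,SL}, and the hypothesis of the corollary is engineered precisely so that this equivalence applies. The only thing to watch is the logical direction---one inequality comes from the theorem and the other from the observation---so that no separate argument about dominating the edge set is required.
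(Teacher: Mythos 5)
Your proof is correct and follows essentially the same route as the paper, which derives the corollary directly from the third bullet of Observation~\ref{SG,SL} (a TDS whose complement is independent is already a TMDS) together with the lower bound $\gamma_t(G)\le\gamma_{tm}(G)$ from Theorem~\ref{max{gama_t  G, gama_t  L(G)} =< gama_t T(G)=<gama_t  G+gama_t  L(G)}. You merely make explicit the two-inequality squeeze that the paper leaves implicit.
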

As some research problems, naturally the next problems can be arised.
\begin{prob}
1. For any graph $G$, is it true that $\gamma_{tm}(G)= \gamma_t(G) $ if and only if it has a min-TDS such that its complement is an independent set of $G$?

2. Find some families of non-complete graphs $G$ with $\gamma_{tm}(G)= \gamma_t(L(G)) $.

3. Find some families of connected graphs $G$ satisfy $\gamma_{tm}(G)= \gamma_t(L(G)) +\gamma_t(G)$.
\end{prob}


The next theorem improves the upper bound given in Theorem \ref{max{gama_t  G, gama_t  L(G)} =< gama_t T(G)=<gama_t  G+gama_t  L(G)} when either the line graph $L(G)$ has a min-TDS $D$ such that there exist less than $\gamma_t(G)$ disjoint maximal cliques in $L(G)\setminus D$, or $G$ has a min-TDS $S$ such that the minimum size of vertex cover of $G\setminus S$ is less than $\gamma_t(L(G))$ (recall that a \emph{vertex cover} of $G$ is a subset $S$ of $V(G)$ such that each edge of $G$ has a vertex in $S$ and the $\beta(G)$ denotes the minimum size of a vertex cover of $G$).
\begin{thm}
\label{gama_t L(G) = gama_t T(G)+k}
For any connected graph $G$ with $\delta(G)\geq 1$, let $c_{D}$ be the minimum number of disjoint maximal cliques in $L(G)\setminus D$ where $D$ is a min-TDS of $L(G)$, and let $\beta(G\setminus S)$ be the minimum size of a vertex cover of $G\setminus S$ where $S$ is a min-TDS of $G$. Then, by the assumptions $c_{L(G)}=\min\{c_D~|~ D \mbox { is a min-TDS of } L(G)\}$ and $\beta_G=\min\{ \beta(G\setminus S)~|~ S \mbox{ is a min-TDS of } G\}$,
\[
\gamma_{tm} (G) \leq \min\{\gamma_t(L(G))+c_{L(G)},\gamma_t(G)+\beta_G\},
\]
and this bound is tight.
\end{thm}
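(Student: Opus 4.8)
The plan is to prove the two upper bounds separately, each by explicitly enlarging a minimum total dominating set into a total mixed dominating set, then to take their minimum; tightness will be witnessed by an explicit graph.

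First I would treat the bound $\gamma_{tm}(G)\le \gamma_t(G)+\beta_G$, which is the cleaner of the two. Let $S$ be a min-TDS of $G$ attaining $\beta(G\setminus S)=\beta_G$, and let $C$ be a minimum vertex cover of the induced subgraph $G\setminus S$, so $|C|=\beta_G$. I claim $S\cup C$ is a TMDS. Every vertex of $G$ is already dominated by $S$ (as $S$ is a TDS), and this includes every vertex of $C$, so the total condition on the added vertices is automatic. For the edges: an edge with an endpoint in $S$ is dominated by that endpoint, while an edge with both endpoints in $\overline{S}$ lies in $G\setminus S$ and hence is covered by $C$. Thus $\gamma_{tm}(G)\le |S|+|C|=\gamma_t(G)+\beta_G$.

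For the dual bound $\gamma_{tm}(G)\le \gamma_t(L(G))+c_{L(G)}$ I would start from a min-TDS $D$ of $L(G)$ attaining $c_D=c_{L(G)}$; viewing $D$ as a set of edges of $G$, Observation \ref{SG,SL} says $D$ dominates every edge of $G$, so the only elements $D$ fails to dominate are the uncovered vertices $U=\{v : v\notin e\text{ for all }e\in D\}$. The key structural point is that $U$ is independent: if $uv$ were an edge with $u,v\in U$, then since $D$ dominates the edge $uv$ some edge of $D$ would meet $\{u,v\}$, forcing $u$ or $v$ to be covered, a contradiction. Consequently every edge incident to a vertex of $U$ joins it to a covered vertex, and the non-$D$ edges at a vertex form a clique (a star) of $L(G)\setminus D$, while the maximal cliques of $L(G)\setminus D$ are exactly such stars together with triangles, each triangle meeting $U$ in at most one vertex. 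I would then take a minimum family of pairwise disjoint maximal cliques of $L(G)\setminus D$ covering $U$ (of which there are $c_{L(G)}$), and from each clique select a single covered vertex incident to its uncovered endpoints; adding these $\le c_{L(G)}$ vertices to $D$ dominates all of $U$, while every added vertex, being covered, is already dominated by $D$. This yields a TMDS of size $\le \gamma_t(L(G))+c_{L(G)}$. Combining the two estimates gives the stated minimum, and for tightness I would exhibit one explicit graph attaining equality and verify each quantity directly.

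The step I expect to be the main obstacle is the clique bookkeeping in the dual bound: arguing that a \emph{single} covered vertex drawn from each clique suffices (so that the count is the number of cliques, not the number of edges one would otherwise need to cover $U$), and that a minimum covering family may be taken to consist of pairwise disjoint maximal cliques, so its size is exactly $c_{L(G)}$. Handling the star-versus-triangle dichotomy of maximal cliques in the line graph, and ensuring disjointness when two chosen centers are themselves joined by a non-$D$ edge, is where the care is required; the independence of $U$ is precisely the lemma that makes this dichotomy manageable.
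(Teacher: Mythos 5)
Your two inequalities are established by essentially the same constructions the paper uses. For $\gamma_{tm}(G)\le\gamma_t(G)+\beta_G$ you spell out exactly the argument the paper dismisses as ``similar'': take $S\cup C$ with $C$ a minimum vertex cover of $G\setminus S$, and your verification (vertices handled by totality of $S$, edges split according to whether an endpoint lies in $S$) is complete. For $\gamma_{tm}(G)\le\gamma_t(L(G))+c_{L(G)}$ the paper adds to $D$ one \emph{edge} chosen from each maximal clique of $L(G)\setminus D$, using that the star of edges at an undominated vertex forms such a clique; you instead add one covered \emph{vertex} of $G$ per clique. The two are interchangeable in the count, and your version is in fact slightly more robust: when an undominated vertex has degree at most two, its star may sit strictly inside a maximal clique that is a triangle of $L(G)\setminus D$, and the paper's single chosen edge of that clique need not be incident to the vertex, whereas your chosen covered neighbour always dominates it. Your explicit proof that the set $U$ of undominated vertices is independent is the observation that makes this bookkeeping work, and it is only implicit in the paper. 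The one genuine omission is tightness: the theorem asserts the bound is tight, and you merely promise a witness without naming one. The paper's witness is the wheel $W_n$, $n\ge 3$: there $\gamma_t(W_n)=2$, the fact that $W_n$ minus a min-TDS $\{v_0,v_1\}$ is a path $P_{n-1}$ gives $\beta_{W_n}=\lfloor(n-1)/2\rfloor$, one computes $\gamma_t(L(W_n))=\lceil n/2\rceil$, and $\gamma_{tm}(W_n)=\lceil n/2\rceil+1=\gamma_t(W_n)+\beta_{W_n}$, so equality holds in the stated minimum. You should close your argument with this (or an equivalent) explicit verification.
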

{\begin{proof}
We show that the total mixed domination number of $G$ is at most the minimum of the given set, when $G=(V,\mathbb{E})$ is a  connected graph with $\delta(G)\geq 1$ and $V=\{v_{1},\ldots,v_{n}\}$, and $e_{ij}$ denotes edge $v_{i}v_{j}$. So  $E(L(G))=\{e_{ij}e_{ik}~|~ e_{ij}, e_{ik}\in \mathbb{E}\}$. First we prove $\gamma_{tm} (G) \leq \gamma_t(L(G))+c_{L(G)}$. Let $D$ be a min-TDS of $L(G)$ such that $c_{L(G)}=c_{D}$. Obviously $D$ dominates all elements of $E(L(G)) \cup \{v_{i} ~|~ e_{ij} \in D \mbox{ for some } j\}$. Let $\mathcal{C}$ be a subset of $E(L(G))$ with cardinality $c_{L(G)}$ such that every maximal clique of $L(G)\setminus D$ has exactly one vertex in $\mathcal{C}$, and let $v_i$ be a vertex that does not dominated by $D$. Since $N_{L(G)}(v_i)=\{e_{ij}|\ v_iv_j\in E(G)\}$ is a maximal clique in $L(G)\setminus D$, we are sure that $v_i$ is dominated by the unique vertex of $\mathcal{C}\cap N_{L(G)}(v_i)$. Thus $D \cup \mathcal{C}$ is a TMDS of $G$, and so $\gamma_{tm} (G) \leq |D \cup \mathcal{C}|=\gamma_t(L(G))+c_{L(G)}$. In a similar way, the inequality $\gamma_{tm} (G) \leq \gamma_t(G)+\beta_G$ can be proved and  this completes our proof. 
\vspace{0.2cm}

As we show in the next lemma, this upper bound is tight for any wheel of order at least 4. 
 \end{proof}

As we show in below, our motivation to sate Theorem \ref{gama_t L(G) = gama_t T(G)+k} is the existance of graphs that the upper bound in Theorem \ref{gama_t L(G) = gama_t T(G)+k} is better than the upper bound in Theorem \ref{max{gama_t  G, gama_t  L(G)} =< gama_t T(G)=<gama_t  G+gama_t  L(G)} for them. Let $W_n$ be a wheel of order $n+1\geq 4$ with the vertex set $ V=\{v_{i}~|~ 0 \leq i \leq n \} $ and the edge set $ \mathbb{E}=\{ e_{0i}, e_{i(i+1)} ~|~ \mbox{for}~ 1\leq i \leq n \}$. Then, since $S=\{v_{0}, v_{1}\}$ is a min-TDS of $W_n$, $\gamma_{t}(W_n)=2$. On the other hand, $W_n \setminus S \cong P_{n-1}$ implies $\beta_{W_n}=\beta(P_{n-1})=\lfloor (n-1)/2 \rfloor$. Hence $\gamma_t(W_n)+\beta_{W_n}=2+\lfloor (n-1)/2 \rfloor=\lceil n/2 \rceil+1$. Since $\gamma_t(L(W_n))=\lceil n/2 \rceil$ by Lemma \ref{gamma_t(L(W_n))=lceil n/2 rceil }, we have
\begin{equation*}
\begin{array}{lll}
\min\{\gamma_t(L(W_n))+c_{L(W_n)},\gamma_t(W_n)+\beta_{W_n}\}& = &  \min \{\lceil n/2 \rceil+c_{L(W_n)}, \lceil n/2 \rceil+1\}\\
     &  = &  \lceil n/2 \rceil+1 \\
     & =  & \gamma_{tm}(W_n) ~~~\mbox{ (by Proposition \ref{gamma_t(T(W_n))=lceil n/2 rceil +1}) }\\
     & < &  \gamma_t(L(W_n))+\gamma_t(W_n).
     \end{array}
\end{equation*} 
\begin{lem}
\label{gamma_t(L(W_n))=lceil n/2 rceil }
 For any wheel $W_n$ of order $n+1\geq 4$, $\gamma_{t}(L(W_n))=\lceil n/2 \rceil$.
\end{lem}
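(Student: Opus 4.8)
The plan is to first pin down the structure of $L(W_n)$ and then sandwich $\gamma_t(L(W_n))$ between a counting lower bound and an explicit construction. Recall that the edges of $W_n$ split into the $n$ \emph{spokes} $e_{0i}$ ($1\le i\le n$) and the $n$ \emph{rim edges} $e_{i(i+1)}$ ($1\le i\le n$), where subscripts are read cyclically in $\{1,\dots,n\}$, so that $e_{(i-1)i}=e_{n1}$ when $i=1$ and $e_{i(i+1)}=e_{n1}$ when $i=n$. In $L(W_n)$ these give the following adjacencies, all read straight off the definition of the line graph: the spokes form a clique $K_n$ (they pairwise share $v_0$); the rim edges form a cycle $C_n$ (since $e_{i(i+1)}$ meets only $e_{(i-1)i}$ and $e_{(i+1)(i+2)}$ among rim edges); and each spoke $e_{0i}$ is adjacent to exactly the two rim edges $e_{(i-1)i}$ and $e_{i(i+1)}$, these being the edges of $W_n$ that share $v_i$ with it. This adjacency description is the only input the proof needs.

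For the lower bound I would count rim edges. The description above shows that \emph{every} vertex of $L(W_n)$, be it a spoke or a rim edge, is adjacent to at most two of the $n$ rim vertices $e_{i(i+1)}$. Now let $D$ be any TDS of $L(W_n)$. Each of the $n$ rim vertices must have a neighbour in $D$, so the number of incidences between $D$ and the rim vertices is at least $n$; on the other hand each element of $D$ accounts for at most two such incidences, so this number is at most $2|D|$. Hence $n\le 2|D|$, i.e.\ $|D|\ge \lceil n/2\rceil$, and therefore $\gamma_t(L(W_n))\ge \lceil n/2\rceil$.

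For the upper bound I would exhibit a TDS using spokes only, namely $D=\{e_{0i}\ :\ 1\le i\le n,\ i\text{ odd}\}$, which has $\lceil n/2\rceil$ elements. Since each chosen spoke $e_{0i}$ dominates the rim edges $e_{(i-1)i}$ and $e_{i(i+1)}$, letting $i$ run through the odd indices shows that consecutive chosen spokes cover consecutive pairs of rim edges around the cycle; a direct check in each parity class confirms that together they dominate all $n$ rim edges (the wrap-around near $v_1$ and $v_n$ is exactly the place where the parity of $n$ has to be inspected). Furthermore, because $n\ge 3$ forces $|D|\ge 2$ and all spokes lie in one clique, every spoke — selected or not — has a neighbour in $D$. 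Thus $D$ totally dominates both the spokes and the rim edges, giving $\gamma_t(L(W_n))\le \lceil n/2\rceil$; combined with the lower bound this yields equality.

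The whole argument is short, and the single delicate point is the rim-coverage step of the construction: one must verify that the odd-indexed spokes really do reach every rim edge for both parities of $n$, and that the chosen set still contains at least two spokes so that the spoke-clique is \emph{totally} dominated rather than merely dominated. Everything else is an immediate consequence of the line-graph adjacencies listed in the first step.
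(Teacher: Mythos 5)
Your proof is correct and follows essentially the same route as the paper's: the lower bound is the same double count of incidences between a total dominating set and the $n$ rim vertices (each vertex of $L(W_n)$ is adjacent to at most two rim edges), and your set of odd-indexed spokes is exactly the paper's dominating set $\{e_{0(2i-1)}~:~1\leq i\leq \lceil n/2\rceil\}$. If anything, your write-up states the counting step more explicitly and cleanly than the paper does.
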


\begin{proof}
Let $W_n$ be a wheel of order $n+1\geq 4$ with the vertex set $ V=\{v_{i}~|~ 0 \leq i \leq n \} $ and the edge set $ \mathbb{E}=\{ e_{0i}, e_{i(i+1)} ~|~ \mbox{for}~ 1\leq i \leq n \}$ (note: $n+1$ is considered 1 to modulo $n$). Let $S$ be a TDS of $L(W_n)$ where $V(L(W_n))=\mathbb{E}$. Then $\{i,i+1\} \cap \{1,2,\ldots,n\} \neq \emptyset$  for each $1\leq i \leq n$ because of $N_{W_n}(e_{i(i+1)}) \cap S=\{v_i,v_{i+1}\} \cap S \neq \emptyset$. Hence $|S| \geq \lceil n/2 \rceil$. Now since $\{e_{0(2i-1)}~|~1\leq i \leq \lceil n/2 \rceil\}$ is a TDS of $L(W_n)$, we have $\gamma_{t}(L(W_n))=\lceil n/2 \rceil $.
\end{proof}

We know for any graph $G$ with a non-empty edge set, $\gamma_{tm}(G) \geq 2$. Corollary \ref{gama_t  G = gama_t T(G)} charactrises graphs $G$ satisfy $\gamma_{tm}(G)=2$. The next theorem gives a sufficient condition for that the total mixed domination number of a graph be at least 3.

\begin{thm}
\label{gama_t(T(G))=2--> diam(G) =<3}
For any connected graph $G$ of order at least $2$, $\gamma_{tm}(G)=2$ implies $diam(G) \leq 3$.
\end{thm}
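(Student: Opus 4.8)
The plan is to exploit the fact that a TMDS must be \emph{total}, so the two members of a minimum TMDS $S=\{x,y\}$ are forced to dominate each other. I would first record this: each of $x,y$ is itself an element of $V\cup \mathbb{E}$ and must be dominated by a member of $S$, and since no element dominates itself, $x$ and $y$ must be adjacent or incident to one another. This single observation pins down their mutual relationship in each of the three cases determined by whether $x,y$ are vertices or edges, after which the argument reduces to a short distance computation. I would also use repeatedly that, by the domination conventions fixed earlier, a vertex is dominated by a member of $S$ exactly when it is adjacent to that member (if the member is a vertex) or an endpoint of it (if the member is an edge).

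The main case is when both members are vertices, say $S=\{u,v\}$. Mutual domination gives $uv\in\mathbb{E}$, and since every vertex of $G$ must be adjacent to $u$ or to $v$, every vertex lies in $N_G(u)\cup N_G(v)$. I would then bound $d(p,q)$ for arbitrary vertices $p,q$: if both lie in $N_G(u)$ (or both in $N_G(v)$) they are joined by a path of length $2$ through that common neighbour; otherwise $p\in N_G(u)$ and $q\in N_G(v)$, and the path $p,u,v,q$ has length $3$ because the guaranteed edge $uv$ bridges the two neighbourhoods. Hence $diam(G)\le 3$ in this case.

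In the remaining two cases the bound is even stronger. If $x=u$ is a vertex and $y=e$ an edge, mutual vertex--edge domination forces $u\in e$, say $e=uw$; then every vertex is dominated by $u$ (so it lies in $N_G(u)$) or by $e$ (so it is $u$ or $w$, and $w\in N_G(u)$), whence every vertex lies in $N_G[u]$, making $u$ universal and $diam(G)\le 2$. If both members are edges, they must share a vertex, say $e=ab$ and $f=bc$ with $a,b,c$ distinct; since an edge dominates a vertex only through incidence, every vertex of $G$ is an endpoint of $e$ or $f$, so $V(G)\subseteq\{a,b,c\}$ and the connected graph $G$ has at most three vertices, giving $diam(G)\le 2$. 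Combining the three cases yields $diam(G)\le 3$. The only delicate point is the distance bound in the first case: one must argue that the bound is $3$ rather than a weaker $4$, and this is precisely where routing a ``split'' pair through the edge $uv$ is essential; all other steps follow immediately from the definitions.
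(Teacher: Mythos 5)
Your proof is correct, but it is organized differently from the paper's. The paper argues contrapositively: assuming $diam(G)\geq 4$ it takes a shortest (hence induced) path $P$ of length at least $4$ and, for each of the three possible shapes of a $2$-element TMDS $S$ (two vertices, a vertex and an edge, two edges), exhibits an element of $P$ --- an edge $e_{pq}$ or a vertex $v_{\ell}$ --- whose neighbourhood misses $S$, contradicting domination. You instead argue directly: you first use totality to force the two members of $S$ to be mutually adjacent or incident, then show in each case that every vertex of $G$ lies within distance $1$ of $S$, and finish with the explicit routing $p,u,v,q$ of length $3$ for a pair split between $N_G(u)$ and $N_G(v)$. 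The case split is the same in both arguments, but your version buys a little more: it isolates the mutual-domination fact as the structural engine, it shows that the bound $3$ is attained only in the two-vertex case (the mixed and two-edge cases force $diam(G)\leq 2$, indeed a dominating vertex or a graph on at most three vertices), and it avoids having to invoke the existence of a long induced path. The paper's contrapositive is marginally shorter to state but leaves the reader to check that a suitable undominated element of $P$ really exists in each case; your direct computation makes that verification explicit. Both are complete proofs.
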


\begin{proof}
Let $G=(V,\mathbb{E})$ be a connected graph with $diam(G)\geq 4$ and  $\delta(G)\geq 1$ in which $V=\{v_{1},\ldots,v_{n}\}$. The condition $diam(G)\geq 4$ implies that $G$ has an induced path $P$ of length at least 4. Let $S$ be a min-TMDS of $G$  of cardinality 2. If $S$ is $\{v_{i}, v_{j}\}$ or $\{v_{i}, e_{ij}\}$ for some $i,j$, then $N_{T(G)}(e_{pq}) \cap S= \emptyset$ for some $v_p,v_q\in V(P)$, a contradiction. Also if $S=\{e_{ij}, e_{jk}\}$  for some $i,j,k$, then $N_{T(G)}(v_{\ell}) \cap S= \emptyset$ for some $v_{\ell}\in V(P)$ such that $\ell\neq i,j,k$, a contradiction. So $\gamma_{tm}(G)\neq 2$.
\end{proof}
Now, we present another upper bound for $\gamma_{tm}(G)$ in term of the order of the graph which is tight by Proposition \ref{gamma_t T(K_n)}.
\begin{thm}
\label{gama_t T(G) =< 2n/3}
For any connected graph $G$ of order $n \geq 2$ which has a Hamiltonian path,
\begin{equation*}
\gamma_{tm}(G)\leq \left\{
\begin{array}{ll}
\lfloor 2n/3\rfloor  & \mbox{if }n \equiv 0 \pmod{3}, \\
\lfloor 2n/3\rfloor +1 & \mbox{if }n \equiv 1,2 \pmod{3}.
\end{array}
\right.
\end{equation*}
\end{thm}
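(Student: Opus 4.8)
The plan is to build, straight from the Hamiltonian path, an explicit total mixed dominating set consisting only of edges, and then to count it. Relabel the vertices so that $v_1 v_2 \cdots v_n$ is a Hamiltonian path, and write $f_i = e_{i(i+1)}$ for the $i$-th path edge ($1 \le i \le n-1$); these edges exist by hypothesis. The key reduction I would isolate first is: if a set $S \subseteq \mathbb{E}$ covers every vertex (each $v_i$ is an endpoint of some edge of $S$) and contains no isolated edge (every edge of $S$ shares a vertex with another edge of $S$), then $S$ is a TMDS. Indeed, covering all vertices gives $\{i,j \mid e_{ij} \in S\} = \{1,\dots,n\}$, and having no isolated edge makes $S$ a TDS of $L(G)$, so $S$ is a TMDS by Observation \ref{SG,SL}. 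Concretely, each $v_i$ is incident to a selected edge, hence dominated; and each edge $v_iv_j$ shares its endpoint $v_i$ with a selected edge distinct from itself (using no-isolation), hence is dominated as well.

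It then remains to choose few path edges that cover all vertices with no isolated edge. First I would select $f_i$ for every $i \not\equiv 0 \pmod 3$; this groups the vertices into consecutive triples $\{v_{3k+1}, v_{3k+2}, v_{3k+3}\}$, each covered by the adjacent pair $f_{3k+1}, f_{3k+2}$, which dominate each other so that neither is isolated. When $n \equiv 0 \pmod 3$ this already covers all $n$ vertices using exactly $2n/3$ edges, giving the first bound. When $n \equiv 1 \pmod 3$ the last vertex $v_n$ is left uncovered, so I add the edge $f_{n-1}$, which is adjacent to the already-selected $f_{n-2}$ and covers $v_n$; the count becomes $\lfloor 2n/3\rfloor + 1$. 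When $n \equiv 2 \pmod 3$ every vertex is already covered but the last selected edge $f_{n-1}$ is isolated, so I add $f_{n-2}$ (adjacent to both $f_{n-3}$ and $f_{n-1}$), again yielding $\lfloor 2n/3\rfloor + 1$.

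The verification in each residue class is a routine finite check of the two conditions (full vertex coverage and no isolated edge) together with a count of the selected edges, which I expect to match the stated values exactly. The part demanding the most care is the boundary at the end of the path: in the two non-divisible cases one must add \emph{exactly} one extra edge and confirm that it simultaneously repairs coverage (for $n \equiv 1$) or totality (for $n \equiv 2$) without introducing a new defect, and that the resulting cardinality is $\lfloor 2n/3\rfloor + 1$ and no larger. A secondary point is the degenerate small case $n = 2$, where the triple pattern is empty and the index $f_{n-2}$ fails to exist; here the claimed bound is $2$, realized directly by $S = \{v_1, v_2\}$. The essential structural idea to emphasize is that covering every vertex forces every edge of $G$ — not merely the path edges — to be dominated, which is exactly what allows a construction read off the Hamiltonian path to handle the arbitrary extra edges of $G$.
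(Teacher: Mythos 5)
Your proof is correct and takes essentially the same route as the paper: the paper's sets $S_0=\{e_{(3i+1)(3i+2)},e_{(3i+2)(3i+3)}\}$, $S_1=S_0\cup\{e_{(n-1)n}\}$, $S_2=S_0\cup\{e_{(n-2)(n-1)},e_{(n-1)n}\}$ coincide exactly with your choice of the path edges $f_i$ with $i\not\equiv 0\pmod 3$ together with the same end-of-path repairs. The only difference is that you spell out the verification (vertex coverage plus no isolated edge implies TMDS, via Observation \ref{SG,SL}) and flag the degenerate case $n=2$, both of which the paper leaves implicit.
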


\begin{proof}
Let $P:v_{1}v_{2}\cdots v_{n}$ be a Hamiltonian path in $G$. Since each of the sets
\begin{equation*}
\begin{array}{ll}
S_0=\{e_{(3i+1)(3i+2)}, e_{(3i+2)(3i+3)},~|~ 0 \leq i \leq \lfloor n/3 \rfloor -1\} & \mbox{if }n \equiv 0 \pmod{3}, \\
S_1=S_0\cup\{e_{(n-1)n}\}  & \mbox{if }n \equiv 1 \pmod{3}, \\
S_2=S_0\cup\{e_{(n-2)(n-1)}, e_{(n-1)n}\} & \mbox{if }n \equiv 2 \pmod{3},
\end{array}
\end{equation*}
is a TMDS of $G$, the result holds.
\end{proof}

It can be easily verified that Theorem \ref{gama_t T(G) =< 2n/3} is true for any connected graph of order at most 5. So the existance of a Hamiltonian path in a graph is not a necessary condition in the theorem, and naturally the following question arises.
\begin{ques}
\label{gamma_t(T(G)) leq lfloor2n/3 rfloor+1}
 Is Theorem \ref{gama_t T(G) =< 2n/3} true for another family of graphs?
\end{ques}
\subsection{Trees}

The facts that $diam(C_4)=2$ and $\gamma_{tm}(C_4)=3$ show that the converse of Theorem \ref{gama_t(T(G))=2--> diam(G) =<3} is not true in general. But next theorem shows that it holds for trees.

\begin{thm}
\label{gama_t(T(G))=2 <--> diam(G) =<3}
For any tree $\mathbb{T}$ of order at least $2$, $\gamma_{tm}(\mathbb{T})=2$ if and only if $diam(\mathbb{T}) \leq 3$.
\end{thm}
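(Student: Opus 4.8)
The plan is to note that one direction is immediate and to settle the other by a short structural argument. Since every tree is a connected graph of order at least $2$, the implication $\gamma_{tm}(\mathbb{T})=2 \Rightarrow diam(\mathbb{T})\le 3$ is precisely Theorem \ref{gama_t(T(G))=2--> diam(G) =<3} applied to $G=\mathbb{T}$, so no work is needed there.

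For the converse I would assume $diam(\mathbb{T})\le 3$ and produce a TMDS of size $2$. Because $\mathbb{T}$ has order at least $2$ its edge set is nonempty, so $\gamma_{tm}(\mathbb{T})\ge 2$ and it suffices to prove $\gamma_{tm}(\mathbb{T})\le 2$. First I would pin down the structure of $\mathbb{T}$: taking a longest path $P$, its length equals $diam(\mathbb{T})\le 3$, and any vertex off $P$ must be a leaf adjacent to an internal vertex of $P$, since otherwise $\mathbb{T}$ would contain a path longer than $P$, contradicting the choice of $P$. Consequently every edge of $\mathbb{T}$ is incident with one of at most two mutually adjacent central vertices; call such an edge $u_1u_2$ (when $diam(\mathbb{T})\le 2$ this is the center of the star together with any leaf, and when $diam(\mathbb{T})=3$ these are the two internal vertices of $P$).

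I would then verify that $S=\{u_1,u_2\}$ is a TMDS. Every vertex of $\mathbb{T}$ lies in $N_{\mathbb{T}}[u_1]\cup N_{\mathbb{T}}[u_2]$, so it is adjacent to an element of $S$; in particular $u_1$ and $u_2$ dominate each other, which secures the \emph{total} requirement. Every edge of $\mathbb{T}$ is incident with $u_1$ or $u_2$, that is, $\{u_1,u_2\}$ is a vertex cover of $\mathbb{T}$, so every edge is incident with an element of $S$. Hence each element of $V\cup \mathbb{E}$ is adjacent or incident to $S$, giving $\gamma_{tm}(\mathbb{T})\le 2$ and therefore $\gamma_{tm}(\mathbb{T})=2$. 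The only genuinely delicate point is the \emph{total} condition: a single vertex already dominates every element of a star, yet fails to be a total mixed dominating set, so one must retain a second element. Choosing the two endpoints of the central edge handles the stars and the diameter-$3$ trees uniformly, and the structural claim that all non-central vertices are leaves is exactly what guarantees that these two vertices also cover every edge.
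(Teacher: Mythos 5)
Your proposal is correct and follows essentially the same route as the paper: both arguments reduce to classifying trees of diameter at most $3$ (edge, star, double star) and exhibiting two adjacent vertices that form a vertex cover and whose neighborhoods cover all of $V$. The only cosmetic difference is that you verify the star case directly with the pair $\{$center, leaf$\}$, whereas the paper delegates that case to its proposition $\gamma_{tm}(K_{m,n})=m+1$.
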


\begin{proof}
By Theorem  \ref{gama_t(T(G))=2--> diam(G) =<3}, it is sufficient to prove that $diam(\mathbb{T}) \leq 3$ implies $\gamma_{tm}(\mathbb{T})=2$. If $diam(\mathbb{T})=1$, then $\mathbb{T} \cong K_{2}$  and so $\gamma_{tm}(\mathbb{T})=2$. If $diam(\mathbb{T})=2$, then $\mathbb{T}$ is isomorphic to the complete bipartite graph $K_{1,n-1}$ and so $\gamma_{tm}(\mathbb{T})=2$ by Proposition \ref{gamma_t  (T(K_{m,n}))}. Now let $diam(\mathbb{T}) =3$. Then $\mathbb{T}$ is a tree which is obtained by joining the central vertex $v$ of a tree $K_{1,p}$ and the central vertex $w$ of a tree $K_{1,q}$ where $p+q=n-2$. Since $\{v,w\}$ is a TMDS of $\mathbb{T}$, we have $\gamma_{tm}(\mathbb{T})=2$.
\end{proof}

Next theorem improves the upper bound given in Theorem \ref{gama_t T(G) =< 2n/3} for trees.
\begin{thm}
\label{gamma_{tm}(mathbb{T}) leq  2n/3}
For any tree $\mathbb{T}$ of order $n \geq 3$, $\gamma_{tm}(\mathbb{T}) \leq \lfloor 2n/3 \rfloor$.
\end{thm}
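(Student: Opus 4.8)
The plan is to argue by strong induction on the order $n$, constructing a small TMDS by peeling a piece off the end of a longest path. The base of the induction is cheap: by Theorem \ref{gama_t(T(G))=2 <--> diam(G) =<3}, every tree with $diam(\mathbb{T})\le 3$ has $\gamma_{tm}(\mathbb{T})=2$, and since $n\ge 3$ we have $2\le \lfloor 2n/3\rfloor$, so all such trees (in particular $P_3,P_4$, stars and double stars) are settled at once. I would also dispose of the few trees of order $3,4,5$ by hand, giving explicit dominating sets of sizes $2,2,3$. Thus in the inductive step I may assume $n\ge 6$ and $diam(\mathbb{T})\ge 4$.

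So fix a longest (diametral) path $v_0v_1\cdots v_d$ with $d\ge 4$; then $v_0$ is a leaf, $v_1$ a support vertex, every neighbour of $v_1$ other than $v_2$ is a leaf, and each branch hanging at $v_2$ away from $v_3$ has height at most $2$. The guiding principle is that every chunk I delete should remove at least three vertices while I return at most two elements to the dominating set, because $\lfloor 2(n-3)/3\rfloor+2=\lfloor 2n/3\rfloor$ keeps the bound exactly in step. The clean generic case is $\deg v_1=\deg v_2=2$: here $v_0v_1v_2$ is a pendant $P_3$ attached at $v_3$, so $\mathbb{T}'=\mathbb{T}-\{v_0,v_1,v_2\}$ is again a tree with $\delta\ge 1$ and order $n-3\ge 3$ (it still contains $v_3,v_4$). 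By the induction hypothesis it has a TMDS $S'$ with $|S'|\le\lfloor 2n/3\rfloor-2$, and then $S=S'\cup\{e_{01},e_{12}\}$ is a TMDS of $\mathbb{T}$: the pair $e_{01},e_{12}$ is self-dominating and covers $v_0,v_1,v_2,e_{01},e_{12},e_{23}$, while everything in $\mathbb{T}'$ is still dominated by $S'$.

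The remaining cases are those where $v_1$ carries several leaves or where $\deg v_2\ge 3$, and they are handled by deleting the whole end-branch $T_{v_2}$ (the subtree consisting of $v_2$ and all its descendants away from $v_3$) and locally total-mixed-dominating it. Writing $T_{v_2}$ as a ``spider'' at $v_2$ with $p$ pendant leaves and $q$ pendant paths $v_2a_ib_i$ of length $2$, so $t:=|T_{v_2}|=1+p+2q$, I would take $v_2$ together with every $a_i$ (and one extra leaf when $q=0$) into $S$; this set, of size about $1+q$, totally mixed dominates $T_{v_2}$ and covers the attaching edge $e_{v_2v_3}$, while the induction hypothesis on $\mathbb{T}-T_{v_2}$ supplies the rest. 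The counting inequality $1+q\le \tfrac23(1+p+2q)$, valid as soon as there is at least one arm, is what makes this efficient; the simplest leaf-heavy subcase ($v_1$ with $\ge 2$ leaves) is just the instance where one deletes the $\ge 3$ vertices $\{v_1\}\cup(\text{its leaves})$ and adds the two vertices $\{v_1,v_2\}$.

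The step I expect to be the main obstacle is not the domination verification but the \emph{floor bookkeeping}, i.e.\ confirming that in every structural case the added count never exceeds $\lfloor 2n/3\rfloor-\lfloor 2(n-t)/3\rfloor$, with particular attention to $n\equiv 1,2\pmod 3$. This is exactly the point where the theorem improves on Theorem \ref{gama_t T(G) =< 2n/3}: a naive peel of size $2$ (or the forced size-$4$ ``two-arm'' configuration at $v_2$) adds one element too many when $n\equiv 1\pmod 3$, so one must bundle vertices in multiples of three, and in the boundary cases where $\mathbb{T}-T_{v_2}$ would be too small or $v_2$ is a cut vertex, enlarge the removed branch to absorb $v_3$ (or part of the $v_3$-side) before applying the hypothesis. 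Verifying that such a compatible peel always exists, uniformly over the residue of $n$ modulo $3$, is the crux of the argument.
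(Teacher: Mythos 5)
Your induction-and-peeling argument is a genuinely different route from the paper's. The paper proves this bound with a short direct construction: fix a leaf $v$, partition $V(\mathbb{T})$ into the three classes $A_i=\{u : d_{\mathbb{T}}(u,v)\equiv i \pmod 3\}$, discard a largest class, and take the union of the other two (replacing each leaf of that union by its neighbour outside it); since every edge of a tree joins vertices whose distances from $v$ differ by exactly one, this set of size at most $\lfloor 2n/3\rfloor$ is already a TMDS, with no induction and essentially no case analysis. Your approach is heavier but more local, and it would generalize more readily to statements about where the dominating elements sit. One reassurance: the ``floor bookkeeping'' you single out as the crux is largely a non-issue. Since $\lfloor a\rfloor+\lfloor b\rfloor\le\lfloor a+b\rfloor$, you always have $\lfloor 2n/3\rfloor-\lfloor 2(n-t)/3\rfloor\ge\lfloor 2t/3\rfloor$ independently of $n\bmod 3$, and in each of your cases the number of added elements is at most $\lfloor 2t/3\rfloor$: you add $2=\lfloor 2\cdot 3/3\rfloor$ for the pendant $P_3$, and $1+q\le\lfloor 2(1+p+2q)/3\rfloor$ for the spider at $v_2$ because $q\ge 1$ (note $v_1$ is always a non-leaf child of $v_2$, so your parenthetical $q=0$ case never occurs; also an arm $a_i$ may carry several leaves, but that only increases $t$ and helps you).

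The genuine gap is the boundary case you only gesture at: the induction hypothesis needs the residual tree to have order at least $3$, and when $d=4$ the residual $\mathbb{T}-T_{v_2}$ can be exactly $\{v_3,v_4\}$ (order $2$), where the theorem is false for the residual ($\gamma_{tm}(K_2)=2>\lfloor 4/3\rfloor$). ``Enlarge the removed branch to absorb $v_3$'' is the right instinct, but it is not carried out, and it is precisely here that an explicit set and an explicit count are required --- for instance $S=\{v_2,v_3\}\cup\{a_i\}$ of size $2+q$ against $n=t+2\ge 3+2q$, which gives $2+q\le\lfloor(6+4q)/3\rfloor=\lfloor 2n/3\rfloor$. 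Without this case (and a one-line check of the handful of trees of order $3,4,5$ you defer to hand verification), the induction does not close. So: viable and interestingly different strategy, but as written it stops one concrete case short of a proof, whereas the paper's distance-class argument finishes in a few lines.
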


\begin{proof}
Let $\mathbb{T}=(V,\mathbb{E})$ be a tree in which $V=\{v_{i}~|~1\leq i \leq n\}$.  Choose a leaf $v$ of $\mathbb{T}$ and label each vertex of $\mathbb{T}$ with its distance from $v$ to modolu $3$. This partitions $V$ to the three independent sets $A_0$, $A_1$ and $A_2$ where $A_i=\{u\in V~|~d_{\mathbb{T}}(u,v) \equiv i \pmod{3}\}$ for $0\leq i \leq 2$. Then by the piegonhole principle at least one of them, say $A_0$, contains at least one third of the vertices of $\mathbb{T}$, and so $|A_1\cup A_2|\leq \lfloor 2n/3 \rfloor$. We see that every \emph{internal vertex}, which is a vertex of degree at least two, and every  leaf $v_i \in V(\mathbb{T})-A_1\cup A_2$ is adjacent to some vertex in  $A_1\cup A_2$. If needed, we replace every leaf $v_i\in A_1\cup A_2$ by an its neighbour out of $A_1\cup A_2$. The obtained set $S$ by this way is a TMDS of $\mathbb{T}$. Because obviously $N_{\mathbb{T}}(v_{i}) \cap S \neq \emptyset$ for each $v_i \in V(\mathbb{T})$, and $\{v_{i},v_{j}\} \cap S \neq \emptyset$ for each $e_{ij} \in \mathbb{E}$ (because $d_{\mathbb{T}}(v,v_{i}) \not\equiv d_{\mathbb{T}}(v,v_{j}) \pmod{3}$), and so every $e_{ij}\in \mathbb{E}$ is dominated by $v_i\in S$ or $v_j\in S$. Therefore $\gamma_{tm}(\mathbb{T}) \leq |S|\leq \lfloor 2n/3 \rfloor$.
\end{proof}

By Proposition \ref{gamma_t  (T(H o P_{m}))} the upper bound $\lfloor 2n/3 \rfloor$ in Theorem \ref{gamma_{tm}(mathbb{T}) leq  2n/3} is tight for any 2-corona $\mathbb{T} \circ P_{2}$ in which $\mathbb{T}$ is a tree of order $n \geq 3$. We recall that the 2-\emph{corona} $G \circ P_{2}$ of a graph $G$ is the graph obtained from $G$ by adding a path of order 2 to each vertex of $G$.

\begin{prop}
\label{gamma_t  (T(H o P_{m}))}
For any connected graph $G$ of order $n\geq 2$, $ \gamma_{tm}(G \circ P_2)=2n$.
\end{prop}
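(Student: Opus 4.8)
The plan is to prove the two inequalities $\gamma_{tm}(G\circ P_2)\le 2n$ and $\gamma_{tm}(G\circ P_2)\ge 2n$ separately, after fixing convenient notation. Write $V(G)=\{v_1,\dots,v_n\}$, and for each $i$ let $v_i a_i b_i$ be the pendant path of order $2$ attached to $v_i$ in forming $G\circ P_2$, so that $a_i$ has degree $2$, $b_i$ is a leaf, and the two new edges are $f_i=v_ia_i$ and $g_i=a_ib_i$. All other edges of $G\circ P_2$ are the original edges $e_{ij}\in\mathbb{E}(G)$.

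For the upper bound I would simply exhibit the set $S=\{v_i,a_i \mid 1\le i\le n\}$, of cardinality $2n$, and check directly that it is a TMDS. Each leaf $b_i$ and each vertex $a_i$ is adjacent to $v_i\in S$ (and $a_i$ to $v_i$, $v_i$ to $a_i$), each edge $f_i$ and $g_i$ is incident to $a_i\in S$ or $v_i\in S$, and every original edge $e_{ij}$ is incident to $v_i\in S$; finally every $v_i\in S$ is adjacent to $a_i\in S$ and every $a_i\in S$ is adjacent to $v_i\in S$, so the totality requirement is met. This gives $\gamma_{tm}(G\circ P_2)\le 2n$.

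The lower bound is the substantive part. For each $i$ set $Y_i=\{v_i,a_i,b_i,f_i,g_i\}$; these five-element sets are pairwise disjoint subsets of $V\cup\mathbb{E}$ (the only shared elements of $G\circ P_2$ are the original edges $e_{ij}$, and none of these lies in any $Y_i$). I would prove that any TMDS $S$ satisfies $|S\cap Y_i|\ge 2$ for every $i$; summing over the disjoint $Y_i$ then yields $|S|\ge 2n$. To establish the local bound I argue by contradiction on the size of $S\cap Y_i$. If $|S\cap Y_i|=0$, then since $N_{T(G\circ P_2)}(b_i)=\{a_i,g_i\}\subseteq Y_i$, the leaf $b_i$ is undominated. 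If $|S\cap Y_i|=1$, domination of $b_i$ forces the single element to be $a_i$ or $g_i$; but if it is $a_i$, then $a_i\in S$ is itself undominated because $N_{T(G\circ P_2)}(a_i)=\{v_i,b_i,f_i,g_i\}\subseteq Y_i$ meets $S$ only in $a_i$, and if it is $g_i$, then $g_i\in S$ is itself undominated because $N_{T(G\circ P_2)}(g_i)=\{a_i,b_i,f_i\}\subseteq Y_i$ meets $S$ only in $g_i$. In each case totality fails, so $|S\cap Y_i|\ge 2$.

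The step I expect to be the main obstacle — or at least the one requiring the most care — is precisely this single-element case of the lower bound, where the \emph{totality} condition does the decisive work: the naive observation that each gadget needs one dominator only gives $|S|\ge n$, and it is the requirement that the chosen element of $S\cap Y_i$ must itself be dominated (from within $Y_i$, since $a_i,b_i,f_i,g_i$ have all their total-neighbours in $Y_i$) that forces a second element. Once $|S\cap Y_i|\ge 2$ is secured, combining it with the matching construction above gives $\gamma_{tm}(G\circ P_2)=2n$, and in particular exhibits the tightness of the bound $\lfloor 2n/3\rfloor$ in Theorem \ref{gamma_{tm}(mathbb{T}) leq  2n/3} when $G$ is a tree.
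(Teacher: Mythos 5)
Your proof is correct and follows essentially the same route as the paper's: the leaf $b_i$ forces an element of $\{a_i,g_i\}$ into $S$, and the totality condition forces a second dominator for that element, all confined to the pairwise disjoint pendant gadgets, yielding $|S|\geq 2n$; the upper bound construction $\{v_i,a_i\}$ is the paper's set $\{v_i,v_{n+i}\}$. Your packaging of the lower bound as $|S\cap Y_i|\geq 2$ over disjoint five-element sets $Y_i$ is a slightly more explicit rendering of the paper's claim that the chosen elements $w_i,w_i'$ are all distinct, but the underlying argument is identical.
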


\begin{proof}
Let $G=(V,\mathbb{E})$ be a connected graph in which $V=\{v_i~|~1\leq i \leq n\}$. Then $V(G \circ P_{2})=\{v_i~|~1\leq i \leq 3n\}$ and $E(G \circ P_{2})= \mathbb{E} \cup \{e_{i(n+i)}, e_{(n+i)(2n+i)}~|~1\leq i \leq n\}$.  Since $\{v_i, v_{n+i}~|~1\leq i \leq n\}$ is a TMDS of $G \circ P_{2}$, we have $ \gamma_{tm}(G \circ P_{2})\leq 2n$. 
\vspace{0.2cm}

Now let $S$ be a min-TMDS of $G \circ P_{2}$. Then $\{v_{n+i}, e_{(n+i)(2n+i)}\}\cap S$ contains an element $w_i$ (because $N_{T(G \circ P_2)}(v_{2n+i})\cap S\neq \emptyset$) for each $1\leq i \leq n$. Since also every $w_i$ must be dominated by an element $w'_i\in N_{T(G \circ P_2)}(w_{i})\cap S$, and all of the elements $w_i$ and $w'_i$ are distinct, we conclude that $S$ includes the set $\{w_i, w'_i~|~1\leq i \leq n\}$ of cardinality $2n$, and so $ \gamma_{tm}(G \circ P_{2}) \geq 2n$, which completes our proof. 

\vskip 0.2 true cm

The set of 
yellow points  $\{v_i~|~ 1\leq i \leq 12\}$ in Figure \ref{fi:proofexample4} shows a min-TMDS of $P_{6} \circ P_{2}$.
\begin{figure}[ht]
\centerline{\includegraphics[width=8cm, height=3.5cm]{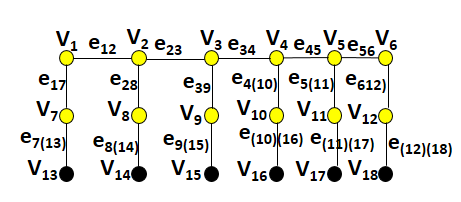}}
\vspace*{-0.3cm}
\caption{A min-TMDS of $P_{6} \circ P_{2}$.}\label{fi:proofexample4}
\end{figure}

\end{proof}

\subsection{\bf Total graphs}

Behzad in \cite{Behzad} defined total of a graph as following:
\begin{defn}
\label{TotalGraph}
\emph{\cite{Behzad} The} total graph $T(G)$ \emph{of a graph $G=(V,\mathbb{E})$ is the graph whose vertex set is $V\cup \mathbb{E}$ and two vertices are adjacent whenever they are either adjacent or incident in $G$.}
\end{defn}


It is obvious that if $G$ has order $n$ and size $m$, then $T(G)$ has order $n+m$ and size $3m+|E(L(G))|$, and also $T(G)$ contains both $G$ and $L(G)$ as two induced subgraphs and it is the largest graph formed by adjacent and incidence relation between graph elements. In Figure \ref{fi:proofexample10} see the total graph of graph $G$ given in Figure  \ref{fi:proofexample1}, for an example.

\vskip 0.2 true cm

It is clear that a total mixed dominating set of a graph $G$ corresponds with a total dominating set of total graph $T(G)$ of $G$. Hence we have the next theorem, and so to find the total mixed domination number of a graph we may calculate the total domination number of total of the graph.

\begin{thm}
\label{gama_{tm}(G)=gama_t(T(G)}
For any graph $G$ with $\delta(G)\geq 1$, $\gamma_{tm}(G)=\gamma_t(T(G))$.
\end{thm}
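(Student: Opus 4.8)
The plan is to show that the defining condition of a total mixed dominating set of $G$ and that of a total dominating set of $T(G)$ are literally the same set-theoretic condition on a subset $S\subseteq V\cup \mathbb{E}$, so that the two families of sets coincide and hence their minimum cardinalities agree. This reduces the whole theorem to an unwinding of definitions, with the adjacency relation of $T(G)$ serving as the bridge.

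First I would recall that, by Definition \ref{TotalGraph}, the vertex set of $T(G)$ is $V\cup \mathbb{E}$ and two of its vertices are adjacent precisely when the corresponding elements of $G$ are adjacent or incident in $G$. This is exactly the relation encoded by the neighborhood $N_{T(G)}(x)=N_{G}(x)\cup N_{E(G)}(x)$ fixed in the introduction: for any $x\in V\cup \mathbb{E}$, the set $N_{T(G)}(x)$ is precisely the open neighborhood of $x$ viewed as a vertex of $T(G)$. Thus, for a subset $S\subseteq V\cup \mathbb{E}$ and an element $x$, the statement ``$x$ is adjacent or incident to some element of $S$'' is identical to ``$x$ has a neighbor in $S$ in the graph $T(G)$'', i.e. $N_{T(G)}(x)\cap S\neq\emptyset$.

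Next I would compare both definitions against this common condition. A set $S$ is a TMDS of $G$ if and only if every element of $V\cup \mathbb{E}$ satisfies $N_{T(G)}(x)\cap S\neq\emptyset$; a set $S$ is a TDS of $T(G)$ if and only if every vertex $x$ of $T(G)$ satisfies $N_{T(G)}(x)\cap S\neq\emptyset$. Since the vertex set of $T(G)$ is exactly $V\cup \mathbb{E}$, these two requirements are word-for-word the same, so the family of total mixed dominating sets of $G$ equals the family of total dominating sets of $T(G)$. Taking minimum cardinalities over the two identical families gives $\gamma_{tm}(G)=\gamma_t(T(G))$.

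The only genuine point to verify, rather than a real obstacle, is that $\gamma_t(T(G))$ is well defined, i.e. that $T(G)$ has no isolated vertex; this is where the hypothesis $\delta(G)\geq 1$ enters. Since every vertex of $G$ lies on at least one edge and every edge has two endpoints, each element of $V\cup \mathbb{E}$ has a neighbor in $T(G)$, so $\delta(T(G))\geq 1$ and total domination of $T(G)$ is meaningful. With this, the argument is a direct translation between the two notions and needs no further estimates.
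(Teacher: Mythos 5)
Your proposal is correct and matches the paper's treatment: the paper simply asserts that total mixed dominating sets of $G$ correspond to total dominating sets of $T(G)$ because the adjacency relation of $T(G)$ encodes exactly ``adjacent or incident in $G$,'' which is precisely the identification you spell out via $N_{T(G)}(x)$. Your additional check that $\delta(G)\geq 1$ guarantees $T(G)$ has no isolated vertices is a sensible detail the paper leaves implicit.
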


The set of yellow points $\{v_1,v_5,e_{12},e_{23},e_{56}, e_{67}\}$ in Figure  \ref{fi:proofexample10} shows a min-TDS of $T(G)$.
\begin{figure}[ht]
\centerline{\includegraphics[width=7cm, height=4.5cm]{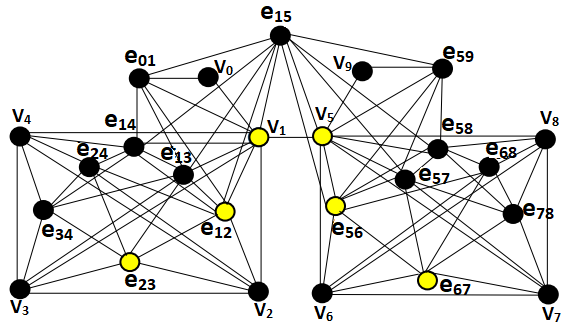}}
\vspace*{-0.3cm}
\caption{A min-TDS of $T(G)$.}\label{fi:proofexample10}
\end{figure}

\section{\bf Special classes of graphs}\label{t80}

In this section, we present formulas for the total mixed domination number of some special classes of graphs. The first two theorems are devoted to paths and cycles.

\begin{prop}
\label{gamma_t T(P_n)}
For any path $P_n$ of order $n\geq 2$,
\begin{equation*}
\gamma_{tm}(P_n)=\left\{
\begin{array}{ll}
4\lceil n/7\rceil -3 & \mbox{if }n\equiv 1 \pmod{7},\\
4\lceil n/7\rceil -2 & \mbox{if }n\equiv 2,3,4 \pmod{7},\\
4\lceil n/7\rceil -1 & \mbox{if }n\equiv 5 \pmod{7},\\
4\lceil n/7\rceil  & \mbox{if }n\equiv 0,6 \pmod{7}.
\end{array}
\right.
\end{equation*}
\end{prop}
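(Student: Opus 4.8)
The plan is to reduce the statement to a total domination problem on the square of a path, and then settle it by a clustering/counting bound that turns out to be tight. By Theorem \ref{gama_{tm}(G)=gama_t(T(G)} we have $\gamma_{tm}(P_n)=\gamma_t(T(P_n))$. Writing the elements of $P_n$ in the alternating order $v_1,e_{12},v_2,e_{23},\dots,v_{n-1},e_{(n-1)n},v_n$ and labelling them $x_1,\dots,x_{2n-1}$ (so $x_{2i-1}=v_i$ and $x_{2i}=e_{i(i+1)}$), one checks directly from Definition \ref{TotalGraph} that $x_j$ and $x_\ell$ are adjacent in $T(P_n)$ precisely when $0<|j-\ell|\le 2$; that is, $T(P_n)\cong P_{2n-1}^{\,2}$, the square of a path on $m:=2n-1$ vertices. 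Thus it suffices to compute $\gamma_t(P_m^{\,2})$ and substitute $m=2n-1$.

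For the lower bound I would argue as follows. Let $S$ be a total dominating set of $P_m^{\,2}$, regarded as a set of integers in $\{1,\dots,m\}$. Partition $S$ into its \emph{clusters}, the connected components of the graph on $S$ in which two positions are joined when they differ by at most $2$. Two observations drive the bound. First, every cluster has at least two elements, since a position with no other element of $S$ within distance $2$ would have no neighbour in $S$, contradicting total domination. Second, if a cluster has $t$ elements then, being connected, its consecutive gaps are at most $2$, so it spans an interval of length at most $2(t-1)$ and hence dominates at most $2(t-1)+5=2t+3$ positions. Since every position of $\{1,\dots,m\}$ is dominated by some cluster, summing over the $C$ clusters gives
\[
m\ \le\ \sum_{\text{clusters}}(2t_c+3)\ =\ 2|S|+3C\ \le\ 2|S|+3\big\lfloor |S|/2\big\rfloor ,
\]
using $C\le\lfloor |S|/2\rfloor$. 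Hence $\gamma_t(P_m^{\,2})\ge\min\{k:\ 2k+3\lfloor k/2\rfloor\ge m\}$.

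For the matching upper bound I would exhibit explicit total dominating sets built from distance-$2$ pairs: a pair $\{a,a+2\}$ dominates the seven consecutive positions $a-2,\dots,a+4$ and is self-dominating, so pairs placed at $\{7j+3,7j+5\}$ tile $\{1,\dots,7J\}$ with $2J$ elements at the optimal rate $2/7$. When $m$ is not a multiple of $7$ I would absorb the leftover positions at the right end into one slightly enlarged last cluster (or a single extra element adjacent to the previous pair), tailoring this tail to each residue of $n\bmod 7$; in every case the resulting set has exactly $\min\{k:2k+3\lfloor k/2\rfloor\ge m\}$ elements, meeting the lower bound. Translating $x_j$ back to vertices and edges yields these sets in a natural mixed form, alternating blocks of two adjacent vertices and two adjacent edges.

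Finally I would close with a short arithmetic check: writing $f(k)=2k+3\lfloor k/2\rfloor$, one has $f(2p)=7p$ and $f(2p+1)=7p+2$, and reading off $\min\{k:f(k)\ge 2n-1\}$ in each of the seven residue classes $n\equiv 0,1,\dots,6\pmod 7$ reproduces exactly the displayed values $4\lceil n/7\rceil-3,-2,-1,0$. The conceptual crux is the clustering lemma (every cluster has at least two elements and a $t$-element cluster dominates at most $2t+3$ positions), which is what makes the lower bound tight rather than merely asymptotic; I expect the only real labour to be the boundary bookkeeping, namely choosing the tail cluster in the construction so that the additive constant comes out correctly in all seven cases.
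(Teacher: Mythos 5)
Your proposal is correct, and its core differs from the paper's argument in a worthwhile way. Both proofs start from the reduction $\gamma_{tm}(P_n)=\gamma_t(T(P_n))$, and both use essentially the same explicit sets for the upper bound (alternating blocks of two consecutive vertices and two consecutive edges, one block per seven positions, with a tailored tail); your observation that $T(P_n)\cong P_{2n-1}^{\,2}$ simply makes explicit the left-to-right linear structure the paper exploits implicitly. Where you genuinely diverge is the lower bound. The paper proves a normalization claim by exchange arguments --- every minimum TDS of $T(P_n)$ can be converted into one whose components are blocks of size two alternating between $V$ and $\mathbb{E}$ --- and then reads the count off that canonical form, a deduction it largely leaves to the reader. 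You instead prove a counting inequality valid for \emph{every} total dominating set $S$ of $P_m^{\,2}$: each cluster has at least two elements, a $t$-element cluster dominates at most $2t+3$ positions, hence $m\le 2|S|+3\lfloor |S|/2\rfloor$; the arithmetic $f(2p)=7p$, $f(2p+1)=7p+2$ with $m=2n-1$ then reproduces all seven residue cases (I verified each one, and the minimum of $\{k: f(k)\ge 2n-1\}$ matches the displayed formula exactly). This buys a cleaner and more self-contained lower bound --- no exchange casework, and no gap between ``a canonical form exists'' and ``hence the displayed sets are minimum'' --- at the modest cost of writing out the seven tail constructions for the upper bound, which you correctly flag as routine bookkeeping and which coincide, after translating positions back to vertices and edges, with the sets $S_0,\dots$ the paper already lists.
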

\begin{proof}
By Theorem \ref{gama_{tm}(G)=gama_t(T(G)}, we calculate the total domination number of $T(P_n)$ when $P_{n}=(V,\mathbb{E})$ is a path of order $n\geq 2$ in which $V=\{v_{1},v_{2},\cdots, v_{n}\}$  and $\mathbb{E}=\{e_{i(i+1)}~|~1\leq i \leq n-1\}$. Then
$V(T(P_n))=V\cup \mathbb{E}$ and $E(T(P_n)) = \mathbb{E} \cup E(L(P_n))\cup \{e_{i(i+1)}v_i,e_{i(i+1)}v_{i+1}~|~ 1\leq i \leq n-1 \}$ in which $E(L(P_n))=\{e_{i(i+1)}e_{(i+1)(i+2)}~|~ 1\leq i \leq n-2\}$. 
\vskip 0.2 true cm
\textbf{Claim:} There exists a min-TDS $S$ of $T(P_{n})$ with the properties:\\
\textsf{P.1 :} $V(G_i)\subseteq V$ if and only if $V(G_{i+1})\subseteq \mathbb{E}$ for each $i$.\\
\textsf{P.2 :} $|V(G_i)|=2$ for each $i$, perhaps except for $i=w$.\\
\textsf{P.3 :} $V(G_{1})\subseteq V$,\\
in which $G_{1}$, $\cdots$, $G_{w}$ are all connected components of the induced subgraph $T(P_n)[S]$ that appear from the left to the right in $T(P_{n})$.
\vskip 0.2 true cm
By proving the claim, each of the sets 
\begin{equation*}
\begin{array}{ll}
S_0=\{v_{7i+2}, v_{7i+3}, e_{(7i+5)(7i+6)}, e_{(7i+6)(7i+7)}~|~ 0 \leq i \leq \lfloor n/7\rfloor-1\} & \mbox{if }n \equiv 0 \pmod{7}, \\
S=S_0 \cup\{e_{(n-1)n}\}& \mbox{if }n \equiv 1 \pmod{7}, \\
S=S_0 \cup\{v_{n-1},v_{n}\} & \mbox{if }n \equiv 2,3 \pmod{7}, \\
S=S_0 \cup\{v_{n-2},v_{n-1}\} & \mbox{if }n \equiv 4 \pmod{7}\\
S=S_0 \cup\{v_{n-3},v_{n-2},v_{n-1}\} & \mbox{if }n \equiv 5 \pmod{7},\\
S=S_0 \cup\{v_{n-4},v_{n-3},e_{(n-2)(n-1)},e_{(n-1)n}\} & \mbox{if }n \equiv 6 \pmod{7}
\end{array}
\end{equation*}
will be a min-TDS of $T(P_n)$, and this completes our proof.
\vskip 0.2 true cm
While the set $\{v_2,v_3,e_{56},e_{67},v_9,v_{10},v_{11}\}$ in Figure  \ref{fi:proofexample5} shows a min-TDS of $T(P_{12})$  (of red points), it shows a min-TMDS of $P_{12}$  in Figure  \ref{fi:proofexample5.5}.  
\vskip 0.2 true cm
\textbf{Proof of the claim:} Let $S$ be a min-TDS of $T(P_{n})$. We may assume for every $e \in E(T(P_{n})[S])$, $e=v_{i}v_{i+1}$ or $e=e_{i(i+1)}e_{(i+1)(i+2)}$ for some $i$. Because otherwise if $e=v_{i}e_{(i-1)i}$ or $e=v_{i}e_{i(i+1)}$ for some $i$, then we can replace $S$ by $(S-\{v_{i}\})\cup \{e_{i(i+1)}\}$ or $(S-\{e_{i(i+1)}\})\cup \{v_{i+1}\}$, respectively, that each of them is again a min-TDS of $T(P_{n})$. So we may assume that every connected component of $T(P_n)[S]$ is a path of order at least $2$ whose vertex set is either a subset of $V$ or a subset of $\mathbb{E}$. Let $G_{1}$, $\cdots$, $G_{w}$ be all connected components of $T(P_{n})[S]$ that appear from left to right in $T(P_{n})$. By the minimality of $S$, we have $|V(G_i)|\leq 4$ for each $i$. Our proof will be completed by showing that $S$ satisfies the above three property.
\vskip 0.2 true cm
\textsf{P.1:} Let $V(G_j)=\{v_i|\ \ell\leq i\leq k \}$  and $V(G_{j+1})=\{v_i|\ k+2\leq i\leq k+r \}$ for some $j$, $\ell$, $k$, $r$. Then we can replace $S$ by $(S\setminus V(G_{j+1}))\cup \{e_{i(i+1)}|\  k+2\leq i \leq k+r\}$ which is again a min-TDS of $T(P_{n})$. There is a similar proof when both of $V(G_i)$ and $V(G_{i+1})$ are subsets of $\mathbb{E}$.
\vskip 0.2 true cm
\textsf{P.2:} We may consider $V(G_i) \subseteq V$, because the case $V(G_i)\subseteq \mathbb{E}$ can be similarly proved. If for some $i$, $V(G_i)=\{v_{j},v_{j+1},v_{j+2},v_{j+3}\}$, then we can replace $V(G_i)$ by $\{v_{j+2},v_{j+3}\}\cup \{e_{(j-1)j},e_{j(j+1)}\}$ or $\{v_{j},v_{j+1}\}\cup\{e_{(j+2)(j+3)},e_{(j+3)(j+4)}\}$, and find the min-TDSs $(S-\{v_{j},v_{j+1}\})\cup\{e_{(j-1)j},e_{j(j+1)}\}$ or $(S-\{v_{j+2},v_{j+3}\})\cup\{e_{(j+2)(j+3)},e_{(j+3)(j+4)}\}$, respectively. Now let $V(G_i)=\{v_{j+1},v_{j+2},v_{j+3}\}$ for some $i$. Then $V(G_{i+1})\subseteq \mathbb{E}$ by \textsf{P.1}, and $j+4=\min \{m~|~e_{m(m+1)}\in V(G_{i+1})\}$ by the minimality of $|S|$. Then we can replace $S$ by the min-TDS $(S-\{v_{j+2}\})\cup \{e_{(j+3)(j+4)}\}$ of $T(P_n)$.
\vskip 0.2 true cm
Since $S$ is minimum, P.3 holds. 
\end{proof}

\begin{figure}[ht]
\centerline{\includegraphics[width=10.2cm, height=1.7cm]{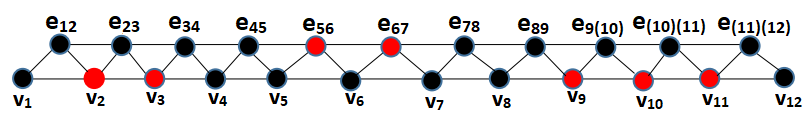}}
\vspace*{-0.3cm}
\caption{A min-TDS of $T(P_{12})$.}\label{fi:proofexample5}
\end{figure}
\begin{figure}[ht]
\centerline{\includegraphics[width=9.3cm, height=2cm]{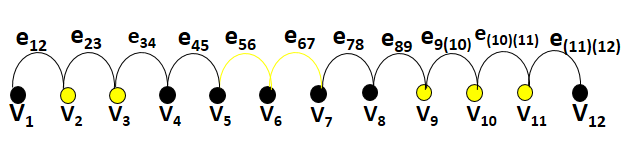}}
\vspace*{-0.3cm}
\caption{A min-TMDS of $P_{12}$.}\label{fi:proofexample5.5}
\end{figure}

\begin{prop}
\label{gamma_t T(C_n)}
For any cycle $C_n$ of order $n\geq 3$,
\begin{equation*}
\gamma_{tm}(C_n)=\left\{
\begin{array}{ll}
4\lceil n/7\rceil -3 & \mbox{if }n\equiv 1 \pmod{7},\\
4\lceil n/7\rceil -2 & \mbox{if }n\equiv 2,3 \pmod{7},\\
4\lceil n/7\rceil -1 & \mbox{if }n\equiv 4 \pmod{7},\\
4\lceil n/7\rceil  & \mbox{if }n\equiv 0,5,6 \pmod{7}.
\end{array}
\right.
\end{equation*}
\end{prop}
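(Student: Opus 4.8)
The plan is to invoke Theorem \ref{gama_{tm}(G)=gama_t(T(G)} to replace $\gamma_{tm}(C_n)$ by $\gamma_t(T(C_n))$, and then to mimic, with the modifications forced by the cyclic setting, the argument used for paths in Proposition \ref{gamma_t T(P_n)}. Writing $V=\{v_1,\dots,v_n\}$ and $\mathbb{E}=\{e_{i(i+1)}\mid 1\le i\le n\}$ with indices read modulo $n$, the total graph $T(C_n)$ has vertex set $V\cup\mathbb{E}$ and the three families of edges coming from adjacency in $C_n$, from adjacency in $L(C_n)$, and from incidence. Arranging its $2n$ vertices cyclically as $v_1,e_{12},v_2,e_{23},\dots,v_n,e_{n1}$ in fact exhibits $T(C_n)$ as the square $C_{2n}^2$, which makes the length-$7$ periodicity underlying the formula transparent.

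For the upper bound I would exhibit, for each residue of $n$ modulo $7$, an explicit total dominating set of $T(C_n)$ (equivalently a TMDS of $C_n$) built from the same repeating block used for paths: one ``vertex pair'' $\{v_{7i+2},v_{7i+3}\}$ together with one ``edge pair'' $\{e_{(7i+5)(7i+6)},e_{(7i+6)(7i+7)}\}$. A direct check, as in the path case, shows that a single such block totally dominates the seven consecutive vertices $v_{7i+1},\dots,v_{7i+7}$ and the seven consecutive edges $e_{(7i+1)(7i+2)},\dots,e_{(7i+7)(7i+8)}$ with no overlap between consecutive blocks, so that $\lfloor n/7\rfloor$ blocks use $4\lfloor n/7\rfloor$ elements. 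When $7\mid n$ the blocks tile $C_n$ and give $4\lceil n/7\rceil$; for the other residues I would attach a short ``cap'' covering the remaining $n-7\lfloor n/7\rfloor$ vertices, chosen exactly so that the element count matches the stated values, taking care that the closing edge of the cap is still totally dominated by the adjacent block (this is the point where the cycle genuinely differs from the path).

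For the lower bound I would first transfer the structural Claim from the proof of Proposition \ref{gamma_t T(P_n)} to $T(C_n)$: by the same local exchange moves one may assume a min-TDS $S$ whose components in $T(C_n)[S]$ are paths of order $2$ (except possibly one), with each component's vertex set lying entirely in $V$ or entirely in $\mathbb{E}$, and with $V$-components and $\mathbb{E}$-components alternating around the cycle. The crucial new feature is that, since we are on a cycle, the alternation must close up, so the number of components is even; a ``balanced'' configuration thus consists of $t$ vertex pairs and $t$ edge pairs. Counting coverage, each vertex pair reaches four consecutive vertices and each edge pair three, so a (vertex pair, edge pair) period spans at most seven vertices; hence $7t\ge n$, which gives $|S|\ge 4t\ge 4\lceil n/7\rceil$ in the balanced case, and a short analysis of the at-most-one exceptional (size $3$ or $4$) component accounts for the $-1,-2,-3$ corrections in the remaining residues.

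The hard part will be the lower bound, and specifically showing that the parity constraint forced by closing the cycle is precisely what separates the cycle values from the path values for $n\equiv 4,5\pmod 7$: one must rule out the slightly cheaper ``unbalanced'' configurations that are available on a path (where the two free ends absorb a defect) but not on a cycle. I would handle this by a case analysis on the residue $n\bmod 7$ and on the type and size of the unique exceptional component, verifying in each case that no admissible configuration beats the claimed bound. This bookkeeping, rather than any single conceptual step, is where the real effort lies.
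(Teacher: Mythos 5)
Your proposal follows essentially the same route as the paper: pass to total domination of the total graph $T(C_n)$, exhibit the same periodic blocks of vertex-pairs and edge-pairs (with a residue-dependent cap) for the upper bound, and adapt the structural claim from the path proposition for minimality. The paper's own proof of the cycle case in fact consists only of the explicit sets plus an appeal to the path argument, so your discussion of the component parity forced by closing the cycle, and the observation that $T(C_n)\cong C_{2n}^2$, supply more of the lower-bound detail than the paper itself records.
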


\begin{proof} Let $C_{n}=(V,\mathbb{E})$ be a cycle of order $n\geq 3$ in which $V=\{v_{1},v_{2},\cdots, v_{n}\}$  and $\mathbb{E}=\{e_{i(i+1)}~|~1\leq i \leq n\}$. Then $V(T(C_n))=V\cup \mathbb{E}$ and $E(T(C_n)) = \{e_{i(i+1)}v_i,e_{i(i+1)}v_{i+1}~|~ 1\leq i \leq n \}\cup \mathbb{E} \cup E(L(C_n))$ where $E(L(C_n))=\{e_{i(i+1)}e_{(i+1)(i+2)}~|~ 1\leq i \leq n\}$. In a similar way to the proof of Proposition \ref{gamma_t T(P_n)}, it can be easily verified that the sets
\begin{equation*}
\begin{array}{ll}
S_0=\{v_{7i+2}, v_{7i+3}, e_{(7i+5)(7i+6)}, e_{(7i+6)(7i+7)}~|~ 0 \leq i \leq \lfloor n/7\rfloor-1\} & \mbox{if }n \equiv 0 \pmod{7}, \\
S=S_0 \cup\{e_{(n-1)n}\}& \mbox{if }n \equiv 1 \pmod{7}, \\
S=S_0 \cup\{v_{n-1},v_{n}\} & \mbox{if }n \equiv 2,3 \pmod{7}, \\
S=S_0 \cup\{v_{n-2},v_{n-1},v_n\} & \mbox{if }n \equiv 4 \pmod{7}\\
S=S_0 \cup\{v_{n-3},v_{n-2},v_{n-1},v_n\} & \mbox{if }n \equiv 5 \pmod{7},\\
S=S_0 \cup\{v_{n-4},v_{n-3},e_{(n-2)(n-1)},e_{(n-1)n}\} & \mbox{if }n \equiv 6 \pmod{7}.
\end{array}
\end{equation*}
are min-TMDSs of $C_n$ in each case, and this completes our proof.

\vskip 0.2 true cm

The set  $\{v_2,v_3,e_{56},e_{67},v_9,v_{10},v_{11}\}$ in Figure  \ref{fi:proofexample6} shows  a min-TMDS of $C_{11}$. 

\begin{figure}[ht]
\centerline{\includegraphics[width=4cm, height=3.7cm]{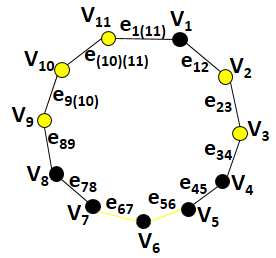}}
\vspace*{-0.3cm}
\caption{A min-TMDS of $C_{11}$.}\label{fi:proofexample6}
\end{figure}
\end{proof}
Propositions \ref{gamma_t T(P_n)} and \ref{gamma_t T(C_n)} show that the total mixed domination numbers of a cycle and a path of the same order are roughly same in the following meaning.
\begin{cor}
\label{comparing Pn, Cn}
For any integer $n\geq 3$,
\begin{equation*}
\gamma_{tm}(C_n)=\left\{
\begin{array}{ll}
\gamma_{tm}(P_n) +1 & \mbox{if }n\equiv 4,5 \pmod{7},\\
\gamma_{tm}(P_n)      & \mbox{otherwise}.
\end{array}
\right.
\end{equation*}
\end{cor}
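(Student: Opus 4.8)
The plan is to derive the corollary directly by comparing the two explicit piecewise formulas established in Propositions \ref{gamma_t T(P_n)} and \ref{gamma_t T(C_n)}, organized residue class by residue class modulo $7$. Since both $\gamma_{tm}(P_n)$ and $\gamma_{tm}(C_n)$ are written as $4\lceil n/7\rceil$ minus a correction term depending only on $n \bmod 7$, the difference $\gamma_{tm}(C_n)-\gamma_{tm}(P_n)$ depends only on the residue of $n$ modulo $7$, and the leading term $4\lceil n/7\rceil$ cancels identically. Thus the entire proof reduces to reading off seven pairs of correction constants and subtracting them.

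First I would tabulate, for each residue $r\in\{0,1,2,3,4,5,6\}$, the value subtracted from $4\lceil n/7\rceil$ in each proposition. For $r=1$ both give $-3$; for $r=2,3$ both give $-2$; for $r=0,6$ both give $0$. In all these cases the corrections coincide, so $\gamma_{tm}(C_n)=\gamma_{tm}(P_n)$, matching the ``otherwise'' branch of the corollary. The two exceptional residues are $r=4$ and $r=5$: for $r=4$ the path formula gives $-2$ while the cycle formula gives $-1$, a difference of $+1$; for $r=5$ the path formula gives $-1$ while the cycle formula gives $0$, again a difference of $+1$. These are precisely the two cases $n\equiv 4,5\pmod 7$ where the corollary asserts $\gamma_{tm}(C_n)=\gamma_{tm}(P_n)+1$.

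I would then simply assemble these seven comparisons into the stated piecewise identity, noting that since the common leading term $4\lceil n/7\rceil$ is identical in both propositions (both use $\lceil n/7\rceil$, not $\lfloor n/7\rfloor$, in the displayed formulas), no subtlety arises from the ceiling function and the subtraction is exact in every residue class. The one point to verify carefully is that the ceiling terms genuinely match: both propositions are stated with $4\lceil n/7\rceil$ as the leading coefficient across all residues, so the cancellation is unconditional.

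There is essentially no obstacle here, as the statement is a bookkeeping consequence of two results proved earlier; the only care needed is to confirm that residues $4$ and $5$ are the sole cases producing a nonzero difference and that this difference is exactly $+1$ rather than something larger. A clean way to present this is a short case split over $n\bmod 7$, or equivalently a single remark that the two formulas agree term-by-term except at residues $4$ and $5$, where the cycle value exceeds the path value by one. This completes the proof.
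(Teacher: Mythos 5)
Your proposal is correct and takes essentially the same approach as the paper, which offers no separate argument for this corollary beyond noting that it follows from comparing the formulas of Propositions \ref{gamma_t T(P_n)} and \ref{gamma_t T(C_n)}. Your residue-by-residue subtraction (with the common leading term $4\lceil n/7\rceil$ cancelling) correctly identifies $n\equiv 4,5\pmod{7}$ as the only classes with difference $+1$ and $0$ elsewhere.
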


In the next step, we calculate the total mixed domination number of a complete bipartite graph.

\begin{prop}
\label{gamma_t  (T(K_{m,n}))}
For any integers $n\geq m\geq 1$, $\gamma_{tm}(K_{m,n})=m+1$.
\end{prop}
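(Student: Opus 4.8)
The plan is to establish the two inequalities $\gamma_{tm}(K_{m,n})\le m+1$ and $\gamma_{tm}(K_{m,n})\ge m+1$ separately. Write the two parts of the bipartition as $A=\{a_1,\dots,a_m\}$ and $B=\{b_1,\dots,b_n\}$ with $n\ge m$, and denote by $e_{ij}$ the edge $a_ib_j$. For the upper bound I would exhibit the explicit set $S=A\cup\{b_1\}$, of cardinality $m+1$, and verify it is a TMDS: every $a_i$ is dominated by $b_1$, every $b_j$ is dominated by $a_1$, and every edge $e_{ij}$ is incident to $a_i\in S$. The lone vertex $b_1$ is included precisely to totally dominate the vertices of $A$, which, the graph being bipartite, have no neighbours inside $A$; this already signals where the ``$+1$'' in the lower bound must come from.

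For the lower bound, let $S$ be an arbitrary TMDS and split it as $S_A=S\cap A$, $S_B=S\cap B$, $S_E=S\cap \mathbb{E}$. Let $A'\subseteq\{1,\dots,m\}$ and $B'\subseteq\{1,\dots,n\}$ record the rows and columns of the vertices in $S_A$ and $S_B$, and let $I$ and $J$ record the rows and columns meeting some edge of $S_E$. The key structural observation is that any edge $e_{ij}$ with $i\notin A'\cup I$ and $j\notin B'\cup J$ lies outside $S$ and is undominated; hence no such pair can exist, forcing either $A'\cup I=\{1,\dots,m\}$ (all rows covered) or $B'\cup J=\{1,\dots,n\}$ (all columns covered). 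Since $|S_A|\ge|A'|$, $|S_B|\ge|B'|$, and $|S_E|\ge\max\{|I|,|J|\}$, the column case immediately gives $|S|\ge|B'|+|J|\ge n\ge m$, which already yields $|S|\ge m+1$ whenever $n>m$.

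The main obstacle is to squeeze the row case from the easy bound $|S|\ge m$ up to $m+1$. Here I would argue by contradiction: assuming $A'\cup I=\{1,\dots,m\}$ and $|S|=m$ forces the chain $|S|\ge|A'|+|I|\ge|A'\cup I|=m$ to be tight, so $S_B=\emptyset$, the sets $A'$ and $I$ are disjoint, and $S_E$ contains exactly one edge in each row of $I$. Totality then fails in both sub-cases. If $A'\ne\emptyset$, a vertex $a_i\in S_A$ has no dominator in $S$, since it would need some $b_j\in S_B=\emptyset$ or an incident edge with $i\in I$, impossible because $A'\cap I=\emptyset$. If $A'=\emptyset$, then $I=\{1,\dots,m\}$ and $S_E$ consists of $m$ edges, one in each row; with $S_A=S_B=\emptyset$ every $b_j$ must be reached by an incident edge of $S_E$, which forces all $n$ columns to be occupied, possible only when $n=m$ and these $m$ edges form a perfect matching, in which case no two of them are incident and so no edge of $S_E$ is totally dominated. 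Either way we reach a contradiction, whence $|S|\ge m+1$ in the row case.

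Finally, the only remaining gap is the column case when $n=m$, where the counting above gives merely $|S|\ge m$; there I would invoke the symmetry of $K_{m,m}$ under interchanging the two parts, which converts the column case into the row case and lets the previous paragraph apply verbatim. Combining the two bounds gives $\gamma_{tm}(K_{m,n})=m+1$. I expect the delicate point to be exactly the tight-case analysis, and in particular the verification that a perfect matching fails to totally dominate its own edges, since that configuration is what makes the naive value $m$ look attainable.
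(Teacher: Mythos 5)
Your proposal is correct, and the upper bound construction $A\cup\{b_1\}$ is exactly the paper's. For the lower bound both arguments assume a TMDS $S$ of cardinality $m$ and count the rows and columns reached by the vertex part and the edge part of $S$, but you organize the contradiction differently: the paper first rules out $S\subseteq A$, $S\subseteq B$ and $S\subseteq \mathbb{E}$ and then performs a single final count on the edges of $S$ not spanned by its vertices, whereas you split on whether all $m$ rows or all $n$ columns are covered and then exploit the equalities forced in the tight case $|S|=m$ (namely $S_B=\emptyset$, $A'\cap I=\emptyset$, and exactly one edge of $S$ per row of $I$). Your route is actually more complete at the delicate point: the paper's assertion that $S\subseteq\mathbb{E}$ with $|S|=m$ must leave some row or column uncovered is not literally true when $n=m$ and $S$ is a perfect matching, and the real reason that configuration fails is the one you supply, namely that pairwise non-adjacent edges with no incident vertex in $S$ are not \emph{totally} dominated. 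Your symmetry reduction for the column case of $K_{m,m}$ is also sound. In short: same skeleton and same key observation (an edge whose two endpoint indices are both unreached is undominated), but your tight-case analysis is cleaner and closes a case the paper's argument glosses over.
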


\begin{proof}
 Let $V\cup U$ be the partition of the vertex set of the complete bipartite graph $K_{m,n}$ to the indipendent sets $V=\{v_{i}~|~ 1\leq i \leq m\}$ and $U=\{u_{j}~|~ 1\leq j \leq n\}$. Since $V\cup \{u_{1}\}$ is a TMDS of $K_{m,n}$, we have $\gamma_{tm}(K_{m,n})\leq m+1$. 

\vskip 0.2 true cm

Now, by the contrary, let $S$ be a TMDS of $K_{m,n}$ with cardinality $m$. Since the subgraph of $K_{m,n}$ induced by $V$ or 
$U$ is isomorphic to the empty graphs $\overline{K_{m}}$ or $\overline{K_{n}}$, respectively, we have $S \nsubseteq V$ and $S \nsubseteq U$. We also prove $S \nsubseteq \mathbb{E}$. For $1\leq i \leq m$ and $1\leq j \leq n$, we define $R_{i}=\{e_{ih}~|~ 
1\leq h \leq n\}$ and $C_{j}=\{e_{hj}~|~ 1\leq h \leq m\}$. Let $I=\{i~|~ 1\leq i \leq m,~R_{i} \cap S \neq \emptyset\}$ and 
$J=\{j~|~ 1\leq j \leq n,~C_{j} \cap S \neq \emptyset\}$. If $S \subseteq \mathbb{E}$, then $I\neq \{1,\cdots, m\}$ or $J\neq 
\{1,\cdots, n\}$, and so for some $i \not\in I $ or some $j \not\in J $, $v_{i}$ or $u_{j}$ is not dominated by $S$, a 
contradiction. So $S \nsubseteq \mathbb{E}$. This implies both of the sets $I_V=\{i~|~1\leq i \leq m,~v_i\in S\}$ and 
$J_U=\{j~|~1\leq j \leq n,~u_j\in S\}$ are nonempty. Because $I_V\neq \emptyset$ and $J_U=\emptyset$ imply 
$N_{T(K_{m,n})}(v_{i}) \cap S= \emptyset$ for some $i$, and $I_V=\emptyset$ and $J_U\neq \emptyset$ imply 
$N_{K_{m,n}}(u_j) \cap S= \emptyset$ for some $j$, which are contradictions. Therefore $R_i\cap S \neq \emptyset$ for each 
$i\not\in I_V$ or $C_j\cap S \neq \emptyset$ for each $j \not\in J_U$ (beacause $R_i\cap S=\emptyset$ for some $i\not\in I_V$ and $C_j\cap S=\emptyset$ for some $j\not\in J_U$ imply $N_{T(K_{m,n})}(e_{ij})\cap S=\emptyset$). Hence $|S \cap 
(\mathbb{E}\setminus \mathbb{E}_{VU})|\geq \min \{n-|I_V|, m-|J_U|\}$ in which $\mathbb{E}_{VU}=\{e_{ij}~|~ i \in I_V \mbox{ and } j \in J_U\}$, and 
so
\begin{equation*}
\begin{array}{lll}
|S| & = & m\\
     & \geq & |S \cap (\mathbb{E}\setminus \mathbb{E}_{VU})|+|I_V|+|J_U|\\
     & \geq & \min \{n-|I_V|, m-|J_U|\}+|I_V|+|J_U| \\
     & > & m,
\end{array}
\end{equation*}
a contradiction. Therefore $\gamma_{tm}(K_{m,n})=m+1$. 

\vspace{0.2cm}

The set of yellow points $\{v_1,v_2,v_{3},u_1\}$ in Figure \ref{fi:proofexample7} shows a min-TMDS of  $K_{3,3}$.
\end{proof}

\begin{figure}[ht]
\centerline{\includegraphics[width=3cm, height=2.7cm]{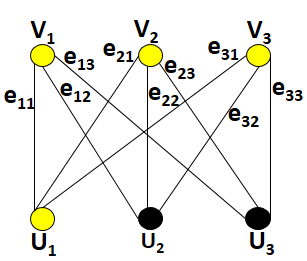}}
\vspace*{-0.3cm}
\caption{ A min-TMDS of $K_{3,3}$.}\label{fi:proofexample7}
\end{figure}


Next proposition gives the total mixed domination number of a complete graph. First a lemma.

\begin{lem}\label{K_nA}
Let $S$ be a min-TMDS of a graph $G=(V,\mathbb{E})$ in which $V=\{v_1,\cdots,v_n\}$ and $\mathbb{E}=\{e_{ij}~|~v_iv_j \mbox{ is an edge}\}$. If $A=\{v_i~|~ v_i\notin S, ~e_{ij}\in S \mbox{ for some }j\}$, then
\begin{equation*}
|S\cap \mathbb{E}|\geq
\left\{
\begin{array}{ll}
\lfloor 2|A|/3\rfloor  & \mbox{if }|A| \equiv 0 \pmod{3}, \\
\lfloor 2|A|/3\rfloor +1 & \mbox{if }|A| \not\equiv 0 \pmod{3}.
\end{array}
\right.
\end{equation*}
\end{lem}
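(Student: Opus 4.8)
The plan is to collapse the two‑case formula into the single inequality $|S\cap\mathbb{E}|\ge\lceil 2|A|/3\rceil$. Writing $|A|=3k$, $3k+1$, or $3k+2$, one checks that $\lceil 2|A|/3\rceil$ equals $2k$, $2k+1$, $2k+2$ respectively, which is precisely $\lfloor 2|A|/3\rfloor$ when $3\mid|A|$ and $\lfloor 2|A|/3\rfloor+1$ otherwise; so it suffices to prove $|S\cap\mathbb{E}|\ge\lceil 2|A|/3\rceil$. Put $F:=S\cap\mathbb{E}$ and let $V'$ be the set of endpoints of the edges of $F$. The first thing I would record is that $V'\subseteq A\cup(S\cap V)$: if an endpoint $v_i$ of some $e_{ij}\in F$ lies outside $S$, then $v_i\in A$ by the very definition of $A$.

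Next I would decompose the graph $H=(V',F)$ into its connected components $C_1,\dots,C_r$ and, for each $C_t$, set $a_t=|A\cap V(C_t)|$ and $f_t=|E(C_t)|$. Since every vertex of $A$ is an endpoint of an edge of $F$ we have $\sum_t a_t=|A|$, and of course $\sum_t f_t=|F|$. The core of the argument is the local estimate $f_t\ge\lceil 2a_t/3\rceil$ for each component; granting it, I would finish by summing, using $\sum_t\lceil 2a_t/3\rceil\ge\lceil 2\sum_t a_t/3\rceil=\lceil 2|A|/3\rceil$, which is valid because each ceiling is at least its argument and the left-hand side is an integer.

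To prove the local estimate I would split on the value of $a_t$. When $a_t\ge 3$, connectivity already suffices: $f_t\ge|V(C_t)|-1\ge a_t-1$, and since $a_t-1$ is an integer with $a_t-1\ge 2a_t/3$ for $a_t\ge 3$, we obtain $f_t\ge\lceil 2a_t/3\rceil$. The cases $a_t\le 2$ are where the totality of $S$ is essential. If $a_t=1$ then $f_t\ge 1=\lceil 2/3\rceil$ since the component carries at least one edge, and $a_t=0$ is vacuous. The delicate case is $a_t=2$, where I must exclude $f_t=1$: were $C_t$ a single edge $e_{ij}$, both endpoints $v_i,v_j$ would lie in $A$ and hence outside $S$, and $e_{ij}$ would have no adjacent edge of $F$, so $N_{T(G)}(e_{ij})\cap S=\emptyset$, contradicting that $S$ is a TMDS; thus $f_t\ge 2=\lceil 4/3\rceil$. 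This $a_t=2$ step is the main obstacle, and it is exactly where the total condition forbids covering two vertices of $A$ by a lone edge, lifting the naive edge‑cover ratio $1/2$ up to the required $2/3$; the disjoint union of copies of $P_3$ shows the resulting bound cannot be improved.
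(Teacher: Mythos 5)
Your proof is correct, and it takes a genuinely different route from the paper's. The paper argues by induction on $|A|$: it first disposes of the case where no edge of $S$ joins two vertices of $A$, then picks $v_i,v_j\in A$ with $e_{ij}\in S$, uses totality to find a second edge $e_{i\ell}\in S$ adjacent to $e_{ij}$, and applies the induction hypothesis to $B=A\setminus\{v_i,v_j,v_\ell\}$ together with $|\mathbb{E}_A|\geq|\mathbb{E}_B|+2$; the base cases $|B|\leq 2$ are asserted without detail. You instead prove the equivalent single bound $|S\cap\mathbb{E}|\geq\lceil 2|A|/3\rceil$ by decomposing the subgraph spanned by $S\cap\mathbb{E}$ into components, establishing the local estimate $f_t\geq\lceil 2a_t/3\rceil$ (via spanning-tree counting for $a_t\geq 3$, and via the total-domination condition precisely once, to rule out an isolated edge of $S$ with both endpoints outside $S$ when $a_t=2$), and summing with the superadditivity of the ceiling. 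Your version has two advantages: it isolates exactly where the hypothesis that $S$ is a TMDS (rather than an arbitrary edge set covering $A$) is needed, and it exhibits the extremal configurations (disjoint copies of $P_3$), showing the bound is sharp; it also avoids the induction bookkeeping over the two possible sizes of $B$. The paper's induction is more compact to state and is the form actually reused later (inequality (3.1) is cited for subsets $B\subseteq A$ in the proof of Proposition 3.6), so if your argument were substituted one would want to note that the per-component estimate applies verbatim to any such subset as well.
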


\begin{proof}
Let $G=(V,\mathbb{E})$ be a graph in which $V=\{v_1,\cdots,v_n\}$ and $\mathbb{E}=\{e_{ij}~|~v_iv_j \mbox{ is an edge}\}$. Let $S$ be a min-TMDS of $G$ and let $A=\{v_i~|~ v_i\notin S, ~e_{ij}\in S \mbox{ for some }j\}$. For any $B\subseteq A$, we define $\mathbb{E}_{B}=\{e_{ij}\in S ~|~ \{v_i,v_j\}\cap B\neq \emptyset\}$. If $N_G(v_i)\cap A=\emptyset$ for each $v_i \in A$, then $\mathbb{E}_{A}$ and $A$ have same cardinality, and so $|S\cap \mathbb{E}|\geq |\mathbb{E}_A|=|A|\geq  \lfloor 2|A|/3\rfloor+1$, as desired. Therefore, we assume that there exist two vertices $v_i,v_j\in A$ while $e_{ij}\in S$, and continue our proof by induction on $|A|$. It can be easily verified that for any $B\subseteq A$ with cardinality at most 2, the following inequality (\ref{m-3}) holds, and we assume it holds for any set of cardinality less than $|A|$. Then we may assume $e_{i\ell}\in N_{T(G)}(e_{ij})\cap S$ for some $\ell\neq j$. By using the induction hypothesis for the set $B=A\setminus \{v_i,v_j,v_{\ell}\}$, which has cardinality $m-3$ or $m-2$, we have
\begin{equation}\label{m-3}
|\mathbb{E}_B|\geq
\left\{
\begin{array}{ll}
\lfloor 2|B|/3\rfloor  & \mbox{if }|B| \equiv 0 \pmod{3}, \\
\lfloor 2|B|/3\rfloor +1 & \mbox{if }|B| \not\equiv 0 \pmod{3}.
\end{array}
\right.
\end{equation}
Now by inequality (\ref{m-3}) and the fact that $|\mathbb{E}_A|\geq |\mathbb{E}_B\cup \{e_{ij}, e_{i\ell}\}|=|\mathbb{E}_B|+2$, our proof will be completed.
\end{proof}

\begin{prop}
\label{gamma_t T(K_n)}
For any complete graph $K_n$ of order $n\geq 2$,
\begin{equation*}
\gamma_{tm}(K_n)=\left\{
\begin{array}{ll}
\lfloor 2n/3\rfloor  & \mbox{if }n \equiv 0 \pmod{3}, \\
\lfloor 2n/3\rfloor +1 & \mbox{if }n \equiv 1,2 \pmod{3}.
\end{array}
\right.
\end{equation*}
\end{prop}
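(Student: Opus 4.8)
The plan is to prove the two inequalities separately. For the upper bound I would simply observe that $K_n$ has a Hamiltonian path for every $n\ge 2$, so Theorem~\ref{gama_t T(G) =< 2n/3} applies verbatim and delivers exactly the claimed values $\lfloor 2n/3\rfloor$ and $\lfloor 2n/3\rfloor +1$; concretely these come from taking consecutive edge-pairs $\{e_{(3i+1)(3i+2)},e_{(3i+2)(3i+3)}\}$ along the path plus the small correction term depending on $n\bmod 3$. So the substance is the matching lower bound $\gamma_{tm}(K_n)\ge \lceil 2n/3\rceil$, where I note that $\lceil 2n/3\rceil$ is precisely the stated piecewise quantity ($\lfloor 2n/3\rfloor$ when $n\equiv 0$ and $\lfloor 2n/3\rfloor+1$ otherwise).

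For the lower bound, let $S$ be a min-TMDS, put $s=|S\cap V|$, $E_S=S\cap\mathbb E$, and partition $V=V_S\sqcup A\sqcup B$, where $V_S=S\cap V$, the set $A=\{v_i\notin S:\ e_{ij}\in S \text{ for some } j\}$ is that of Lemma~\ref{K_nA}, and $B$ collects the vertices neither in $S$ nor incident to an edge of $S$. The first key step is that $|B|\le 1$: if $v_q,v_{q'}\in B$, then the edge $e_{qq'}$ of $K_n$ has neither endpoint in $S$ and, by definition of $B$, no incident edge in $S$, hence is undominated, a contradiction. Consequently $|A|=n-s-|B|\ge n-s-1$, and since $f(a):=\lceil 2a/3\rceil$ is nondecreasing, Lemma~\ref{K_nA} gives $|E_S|\ge f(n-s-1)$ and thus $|S|\ge s+f(n-s-1)$.

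I would then split on $s$. If $s=0$, every vertex must be covered by an edge of $S$, so $A=V$ and $|S|=|E_S|\ge f(n)=\lceil 2n/3\rceil$. If $s\ge 2$, a direct computation (using $\lceil(2n-6)/3\rceil=\lceil 2n/3\rceil-2$) shows that $s+f(n-s-1)$ is minimized over $s\ge 2$ at $s=2$, with value $\lceil 2n/3\rceil$, and is nondecreasing thereafter; hence $|S|\ge\lceil 2n/3\rceil$. Checking all residues, the only parameter combination in which the crude estimate $s+f(n-s-1)$ falls exactly one short of the target is $s=1$, $n\equiv 2\pmod 3$, and $|B|=1$, and this is where I expect the real obstacle: the Lemma is tight precisely on cherry-configurations, which do not see the presence of the single vertex of $V_S$.

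To close that remaining case (writing $n=3k+2$ and $V_S=\{v_p\}$, $B=\{v_q\}$, $|A|=3k$), I would study $E_S$ as a graph $H$ on the vertex set $A\cup\{v_p\}$. Every vertex of $A\cup\{v_p\}$ has degree at least $1$ in $H$; in particular $v_p$ is incident to an edge of $E_S$, since with $s=1$ this is the only way $v_p$ can be totally dominated. The total-domination condition forces each component of $H$ not containing $v_p$ to have at least two edges, hence at least three vertices of $A$, since a lone edge among $A$-vertices would be undominated. Only the component $C_0$ of $v_p$ may be a tree, and it ``wastes'' the vertex $v_p$. Counting edges component by component ($m_0\ge a_0$ for the $a_0\ge 1$ vertices of $A$ in $C_0$, and $m_i\ge a_i-1$ with $a_i\ge 3$ in each of the $r$ remaining components) yields $|E_S|\ge |A|-r$ with $r\le\lfloor(|A|-1)/3\rfloor=k-1$; therefore $|E_S|\ge 2k+1$ and $|S|\ge 2k+2=\lceil 2n/3\rceil$. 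The crux is exactly this inefficiency argument: one must exploit that the single vertex $v_p\in S$ makes its edge-component strictly less efficient than the cherries on which Lemma~\ref{K_nA} is tight, recovering the extra $+1$ that the direct bound misses.
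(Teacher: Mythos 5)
Your proposal is correct and shares the paper's skeleton: the upper bound comes from the same Hamiltonian-path construction (Theorem \ref{gama_t T(G) =< 2n/3}), the lower bound is driven by Lemma \ref{K_nA} applied to the set $A$ of non-$S$ vertices covered by edges of $S$, both arguments note that at most one vertex can be neither in $S$ nor covered by an edge of $S$ (your set $B$), and both isolate the same critical case $|S\cap V|=1$, $n\equiv 2\pmod 3$, $|A|=n-2$, where the crude count $s+\lceil 2(n-s-1)/3\rceil$ falls exactly one short. The one genuine divergence is how that case is closed. The paper takes the edge $e_{pn}\in S$ forced by the total domination of the unique vertex $v_n\in S\cap V$, applies the inductive inequality from the proof of Lemma \ref{K_nA} to $A\setminus\{v_p\}$ (whose size is $\equiv 2\pmod 3$), and adds back $e_{pn}$ and $v_n$ to recover the missing $+1$. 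You instead count components of the graph spanned by $S\cap\mathbb{E}$ on $A\cup\{v_p\}$: every component avoiding $v_p$ must contain at least two edges (a lone edge joining two $A$-vertices would be undominated) and hence at least three $A$-vertices, which gives $|S\cap\mathbb{E}|\geq |A|-r\geq 2k+1$ with $r\leq k-1$, so $|S|\geq 2k+2$. Your closing argument is more self-contained, since it does not reuse an inequality that the paper only establishes for subsets $B\subseteq A$ inside the induction of Lemma \ref{K_nA}; the paper's is slightly shorter granted that inequality. The arithmetic you rely on ($s+\lceil (2n-2s-2)/3\rceil=\lceil (2n+s-2)/3\rceil\geq\lceil 2n/3\rceil$ for $s\geq 2$, and the identification of the unique deficient residue case) checks out, so I see no gap.
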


\begin{proof}
Let $ K_{n} $ be a complete graph with the vertex set $V=\{v_{1}, v_{2},\cdots, v_{n}\}$ and the edge set $\mathbb{E}$. By Propositions \ref{gamma_t T(P_n)} and \ref{gamma_t T(C_n)}, we may assume $n\geq 4$. For any arbitrary TMDS $S$ of $ K_{n}$ we show
 \begin{equation}\label{ajjjj}
|S|\geq \left\{
\begin{array}{ll}
\lfloor 2n/3\rfloor  & \mbox{if }n \equiv 0 \pmod{3}, \\
\lfloor 2n/3\rfloor +1 & \mbox{if }n \equiv 1,2 \pmod{3}.
\end{array}
\right.
\end{equation}
If $S\cap \mathbb{E} =\emptyset$, then $|S|\geq n-1 \geq \lfloor 2n/3\rfloor +1$, and there is nothing to prove (because otherwise, for any two vertices $v_{i}$ and $v_{j}$ out of $S$, the edge $e_{ij}$ can not be dominated by $S$). Also if $ S\cap V=\emptyset$, then there exists an edge $e_{ik_i}\in S$ for dominating $v_i$ by $S$, and so inequality (\ref{ajjjj}) holds, by Lemma \ref{K_nA}. Therefore we assume $ S\cap V \neq \emptyset$ and $S\cap \mathbb{E} \neq \emptyset$. Let $|S\cap V|=\ell\geq 1$. Then the set 
\[
A=\{v_i~|~ v_i\in V-S \mbox{ and } e_{ij}\in S \mbox{ for some } j \}
\]
has cardinality at least $n-\ell-1$ (because otherwise, for any two vertices $v_i, v_i\in V-S$, the edge $e_{ij}$ does not dominate by $S$), and so
\begin{equation*}
|S\cap \mathbb{E} | \geq \left\{
\begin{array}{ll}
\lfloor 2(n-\ell-1)/3\rfloor  & \mbox{if }n \equiv \ell+1 \pmod{3}, \\
\lfloor 2(n-\ell-1)/3\rfloor +1 & \mbox{if }n\equiv \ell,\ell+2 \pmod{3},
\end{array}
\right.
\end{equation*}
by Lemma \ref{K_nA}. Hence
\begin{equation*}
|S|=|S\cap V|+|S\cap \mathbb{E} |  \geq \left\{
\begin{array}{ll}
\lfloor (2n+\ell-2)/3\rfloor  & \mbox{if }n \equiv \ell+1 \pmod{3},\\
\lfloor (2n+\ell-2)/3\rfloor +1 & \mbox{if }n \equiv \ell,\ell+2 \pmod{3},
\end{array}
\right.
\end{equation*}
which implies
\begin{equation*}
|S|\geq \left\{
\begin{array}{lll}
\lfloor 2n/3\rfloor  & \mbox{if }n \equiv 0 \pmod{3}, \\
\lfloor 2n/3\rfloor +1 & \mbox{if }n\equiv 1\pmod{3},\\
\lfloor 2n/3\rfloor +1 & \mbox{if }n \equiv 2 \pmod{3} \mbox{ and } \ell\neq 1.
\end{array}
\right.
\end{equation*}

Now we discuse on the only remained case $n\equiv 2 \pmod{3}$ and $\ell=1$. Let $S\cap V=\{v_n\}$ and $\mathbb{E}_A=\{e_{ij}\in S~|~ \ \{v_i,v_j\}\cap A\neq \emptyset\}$. Then $n-2\leq |A|\leq n-1$, and
\begin{equation*}
|\mathbb{E}_A| \geq \left\{
\begin{array}{lll}
\lfloor 2(n-2)/3\rfloor  & \mbox{if }|A|=n-2 \equiv 0 \pmod{3}, \\
\lfloor 2(n-1)/3\rfloor +1 & \mbox{if }|A|=n-1 \equiv 1 \pmod{3}.
\end{array}
\right.
\end{equation*}
Since $|A|=n-1$ implies $|S| = |S\cap V|+|S\cap \mathbb{E}| \geq 1+|\mathbb{E}_A| \geq \lfloor 2n/3\rfloor +1$, as desired, we assume $|A|=n-2$. For some $p\neq n$, let $e_{pn}\in S$ be an edge that dominates $v_n$. Then $v_p\in A$ and $e_{pn}\in \mathbb{E}_A$, and so $|A\setminus \{v_p\}|=n-3 \equiv 2 \pmod{3}$. Hence $|\mathbb{E}_{A\setminus \{v_p\}}|\geq \lfloor 2(n-3)/3\rfloor +1$ by (\ref{m-3}).
Now the facts $e_{pn}\notin \mathbb{E}_{A\setminus \{v_p\}}$ and $\mathbb{E}_{A\setminus \{v_p\}}\cup\{e_{pn},v_n\}\subseteq S$ imply $|S|\geq |\mathbb{E}_{A\setminus \{v_p\}}|+2\geq \lfloor 2n/3\rfloor +1$, as desired. On the other hand, since each of the sets
\begin{equation*}
\begin{array}{ll}
S_0=\{e_{(3i+1)(3i+2)}, e_{(3i+2)(3i+3)}~|~ 0 \leq i \leq k-1\}& \mbox{if }n \equiv 0 \pmod{3}, \\
S_1=S_0\cup\{e_{(3k)(3k+1)}\} & \mbox{if }n \equiv 1 \pmod{3}, \\
S_2=S_0\cup\{e_{(3k)(3k+1)}\}, e_{(3k+1)(3k+2)}\} & \mbox{if }n \equiv 2 \pmod{3},
\end{array}
\end{equation*}
is a TMDS of $K_n$ with the minimum cardinality when $k=\lfloor n/3 \rfloor$, we have proved
\begin{equation*}
\gamma_{tm}(K_n)=\left\{
\begin{array}{ll}
\lfloor 2n/3\rfloor  & \mbox{if }n \equiv 0 \pmod{3}, \\
\lfloor 2n/3\rfloor +1 & \mbox{if }n \equiv 1,2 \pmod{3}.
\end{array}
\right.
\end{equation*}
 The set of red edges $\{e_{12},e_{23},e_{34}\}$ in Figure \ref{fi:proofexample8} shows a min-TMDS of $K_{4}$.
\begin{figure}[ht]
\centerline{\includegraphics[width=4.1cm, height=2.7cm]{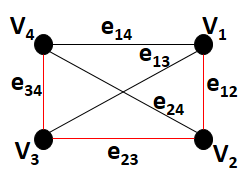}}
\vspace*{-0.3cm}
\caption{A min-TMDS of $K_{4}$.}\label{fi:proofexample8}
\end{figure}
\end{proof}
Before giving the total mixed domination number of a wheel, as we promise in the proof of Theorem \ref{max{gama_t  G, gama_t  L(G)} =< gama_t T(G)=<gama_t  G+gama_t  L(G)}, we show that  the lower bound in Theorem \ref{max{gama_t  G, gama_t  L(G)} =< gama_t T(G)=<gama_t  G+gama_t  L(G)} is tight by Proposition \ref{gamma_t T(K_n)} and calculating the total domination number of the line graph of a complete graph in the next proposition.

\begin{prop}
\label{gama_t(L(K_n))=2n/3}
For any complete graph $K_{n}$ of order $n\geq 4$, $\gamma_t(L(K_{n}))=\lfloor 2n/3 \rfloor$.
\end{prop}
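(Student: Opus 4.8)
The plan is to identify the vertices of $L(K_n)$ with the edges of $K_n$ and, from a total dominating set $S\subseteq\mathbb{E}$, read off the spanning subgraph $H=(V,S)$ of $K_n$. First I would record the two structural conditions equivalent to $S$ being a TDS of $L(K_n)$. By the first bullet of Observation~\ref{SG,SL}, an edge $e_{pq}$ of $K_n$ is dominated exactly when $\{p,q\}$ meets the set $V(S)$ of endpoints of edges in $S$; since $K_n$ contains the edge joining \emph{any} two vertices, this forces at most one vertex of $K_n$ to lie outside $V(S)$. The genuinely \emph{total} requirement adds that each $e_{ij}\in S$ must itself be adjacent in $L(K_n)$ to another edge of $S$, i.e. $H$ has no component consisting of a single edge. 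Thus $S$ is a TDS of $L(K_n)$ if and only if $H$ leaves at most one vertex uncovered and has no isolated-edge component.

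For the lower bound, let $v=|V(S)|$, so $v\ge n-1$. After deleting the (at most one) isolated vertex, $H$ splits into connected components, each carrying at least two edges and hence spanning $c_i\ge 3$ vertices. A connected component on $c_i$ vertices has at least $c_i-1$ edges, so $|S|\ge\sum_i(c_i-1)=v-t$, where $t$ is the number of components; since each $c_i\ge 3$ we have $t\le\lfloor v/3\rfloor$, giving $|S|\ge v-\lfloor v/3\rfloor=\lceil 2v/3\rceil\ge\lceil 2(n-1)/3\rceil$. A short residue check shows $\lceil 2(n-1)/3\rceil=\lfloor 2n/3\rfloor$ for every $n$, which yields $\gamma_t(L(K_n))\ge\lfloor 2n/3\rfloor$.

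For the matching upper bound I would exhibit such an $H$ built from vertex-disjoint paths. Group the vertices into blocks of three and place a path $P_3$ (two edges) on each block: when $n\equiv 0\pmod 3$ this covers all $n$ vertices with $2n/3$ edges; when $n\equiv 1\pmod 3$ it covers $n-1\equiv 0$ vertices and leaves a single vertex uncovered, using $2(n-1)/3=\lfloor 2n/3\rfloor$ edges; when $n\equiv 2\pmod 3$ I would replace one block by a $P_4$ on four vertices (three edges) so as to cover $n-1$ vertices and again leave exactly one uncovered, using $\lfloor 2n/3\rfloor$ edges. In each case $H$ has no isolated edge and leaves at most one vertex uncovered, so by the equivalence above the chosen edge set is a TDS of $L(K_n)$ of size $\lfloor 2n/3\rfloor$.

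The main obstacle is the lower bound: one must recognize that the only slack gained from \emph{total} domination is the freedom to leave a single vertex uncovered, and then see that minimizing the number of edges reduces to packing the covered vertices into as many three-vertex, two-edge path components as possible. The component count $|S|\ge v-\lfloor v/3\rfloor$ organizes this cleanly, after which the identity $\lceil 2(n-1)/3\rceil=\lfloor 2n/3\rfloor$ makes the two bounds meet; pinning down the boundary residue $n\equiv 2\pmod 3$, where one extra edge is forced, is the only delicate point.
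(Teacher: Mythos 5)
Your proof is correct and follows essentially the same route as the paper's: both identify the vertices of $L(K_n)$ with the edges of $K_n$, observe that a total dominating set can leave at most one vertex of $K_n$ uncovered (else some edge $e_{ij}$ is undominated), and realize the bound with a union of vertex-disjoint paths adjusted by residue mod $3$. Your explicit component count ($|S|\geq v-t$ with $t\leq\lfloor v/3\rfloor$, since totality forces every component of $H$ to carry at least two edges and hence at least three vertices) supplies the step the paper leaves implicit when it jumps from ``at most one index is missing'' straight to $|S|\geq\lfloor 2n/3\rfloor$, so your write-up is in fact the more complete of the two.
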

\begin{proof}
Let $K_{n}=(V,\mathbb{E})$ be a complete graph of order $n\geq 4$ with vertex set $V=\{v_{1}, \cdots, v_{n}\}$ and the edge set $\mathbb{E}$. Then $V(L(K_{n}))=\mathbb{E}$. For any TDS $S$ of $L(K_{n})$, let $I$ be the set of all indices of the vertices of $S$. Obviousely for any three indices $1\leq i<j<k\leq n$, $|\{i,j,k\} \cap I| \geq 2$ because if $i,j\not \in I$, for example, then the vertex $e_{ij}$ can not be dominated by $S$. Thus $|S| \geq \lfloor 2n/3 \rfloor $, and since the sets
\begin{equation*}
\begin{array}{ll}
S_{0}=S_{2} \cup \{e_{(n-2)(n-1)} \} & \mbox{if }n \equiv 0 \pmod{3},\\
S_{1}=\{e_{(3i+1)(3i+2)}, e_{(3i+2)(3i+3)},~|~ 0 \leq i \leq \lfloor n/3 \rfloor -1\} & \mbox{if }n \equiv 1 \pmod{3}, \\
S_{2}=S_{1} \cup \{e_{(n-2)(n-1)} \} & \mbox{if }n \equiv 2 \pmod{3},
\end{array}
\end{equation*} 
are TDSs of $L(K_{n})$ in each of the cases, the result holds.
\end{proof}

\begin{prop}
\label{gamma_t(T(W_n))=lceil n/2 rceil +1}
 For any wheel $W_n$ of order $n+1\geq 4$, $\gamma_{tm}(W_n)=\lceil n/2 \rceil +1$.
\end{prop}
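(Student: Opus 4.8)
The plan. The upper bound is essentially already available: by Theorem~\ref{gama_t L(G) = gama_t T(G)+k} together with the computation carried out just before Lemma~\ref{gamma_t(L(W_n))=lceil n/2 rceil }, one has $\gamma_{tm}(W_n)\le \gamma_t(W_n)+\beta_{W_n}=2+\lfloor (n-1)/2\rfloor=\lceil n/2\rceil+1$. For self-containedness I would instead exhibit an explicit total mixed dominating set: let $C$ be a minimum vertex cover of the rim cycle $v_1v_2\cdots v_nv_1$, so $|C|=\lceil n/2\rceil$, and put $S=\{v_0\}\cup C$. Then $v_0$ dominates every rim vertex and every spoke, $C$ covers every rim edge and dominates $v_0$, and every element of $S$ has a neighbour in $S$; hence $S$ is a TMDS of size $\lceil n/2\rceil+1$. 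The whole content is therefore the matching lower bound $\gamma_{tm}(W_n)\ge \lceil n/2\rceil+1$.

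For the lower bound I would fix a min-TMDS $S$ and run two counting estimates on the $n$ rim edges $e_{i(i+1)}$ and the $n$ rim vertices $v_i$ (indices mod $n$). First, every element of $S$ is incident or adjacent to at most two rim edges, while the hub $v_0$ is adjacent to none; since all $n$ rim edges must be dominated, this gives $2|S|\ge n$ if $v_0\notin S$ and $2(|S|-1)\ge n$ if $v_0\in S$. The second case already yields $|S|\ge \lceil n/2\rceil+1$, so it remains to exclude a TMDS of size exactly $\lceil n/2\rceil$ with $v_0\notin S$. Writing $a,b,s$ for the numbers of rim vertices, rim edges and spokes in $S$, we have $a+b+s=\lceil n/2\rceil$ and $a+s\ge 1$ (needed to dominate $v_0$). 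The second estimate comes from the fact that each rim vertex must be dominated, where a rim vertex or a rim edge of $S$ dominates at most two rim vertices but a spoke dominates only one (recall $v_0\notin S$); this gives $2a+2b+s\ge n$, forcing $s\le 2\lceil n/2\rceil-n$. In particular $s=0$ when $n$ is even and $s\le 1$ when $n$ is odd.

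Two short sub-arguments then finish the proof. If $s=0$, each spoke $e_{0j}$ must be dominated by one of $v_j,e_{(j-1)j},e_{j(j+1)}$, so $\{v_j,e_{(j-1)j},e_{j(j+1)}\}\cap S\ne\emptyset$ for all $j$; counting blocked indices ($v_i$ blocks one, a rim edge blocks two) gives $a+2b\ge n$, whence $b\ge\lfloor n/2\rfloor$ and $a\le\lceil n/2\rceil-\lfloor n/2\rfloor\le 1$. For even $n$ this contradicts $a\ge 1$; for odd $n$ it forces $a=1$, $b=\lfloor n/2\rfloor$ with the blocking exactly tight, so the rim edges of $S$ form a matching avoiding the unique rim vertex of $S$, and then each of these (at least one) rim edges has no dominator in $S$, a contradiction. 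If $s=1$ (possible only for odd $n$), the rim-vertex estimate is tight, so the spoke $e_{0p}\in S$ is the \emph{only} element dominating $v_p$; this forces $v_{p-1},v_{p+1},e_{(p-1)p},e_{p(p+1)}\notin S$, and since $e_{0p}$ itself must be dominated we are driven to $v_p\in S$. But then $v_p$ and $e_{0p}$ both dominate the two rim edges $e_{(p-1)p}$ and $e_{p(p+1)}$, giving two distinct doubly-covered rim edges, whereas $2|S|=n+1$ permits total overlap at most one, a contradiction.

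The two counting inequalities are routine. The delicate part, and precisely the reason the answer is $\lceil n/2\rceil+1$ and not $\lceil n/2\rceil$, is the tight regime $v_0\notin S$ with $n$ odd: there the naive rim-edge count only returns $\lceil n/2\rceil$, and one must feed in domination of the spokes and of the elements of $S$ themselves, against a nearly perfect rim-edge covering, to squeeze out the final contradiction. I expect this case (especially the single-spoke configuration) to be the main obstacle.
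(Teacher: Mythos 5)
Your proof is correct, and it reaches the bound by a genuinely different route from the paper. Both arguments use the same upper-bound construction (the hub $v_0$ together with roughly every other rim vertex), so the difference is entirely in the lower bound. The paper fixes $\ell=|S\cap V|$, first disposes of the pure cases $S\cap\mathbb{E}=\emptyset$ and $S\cap V=\emptyset$, and then proves $|S\cap\mathbb{E}|\ge\lceil n/2\rceil-\ell+1$ by a fairly intricate structural analysis: it introduces the sets $\mathbb{E}_0,\mathbb{E}_1,\mathbb{E}'_0,\mathbb{E}''_0$, reduces to the configuration $\ell=2$, $S\cap V=\{v_1,v_2\}$, $v_0\notin S$, $n$ odd, and finishes with a fractional count of how many vertices of $V'=V-N(\{v_1,v_2\})$ the edges of $\mathbb{E}''_0$ can dominate. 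You instead run two global capacity counts --- each non-hub element dominates at most two rim edges and the hub none, and each element dominates at most two rim vertices with spokes dominating only one --- and combine them linearly to pin down $s\le 2\lceil n/2\rceil-n$; the residual cases $s=0$ and $s=1$ are then killed by tightness (the forced matching structure leaves a rim edge of $S$ undominated, respectively the pair $v_p,e_{0p}$ forces two doubly-covered rim edges against a slack of one). I checked the capacities ($v_0$ covers no rim edge; a spoke covers exactly one rim vertex when $v_0\notin S$; the dominators of a spoke $e_{0j}$ are $v_0,v_j,e_{(j-1)j},e_{j(j+1)}$ and the other spokes) and the tightness deductions, including the small cases $n=3,5$, and they all hold. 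What your approach buys is a shorter, more symmetric argument that avoids the paper's normalization $S\cap V=\{v_1,v_2\}$ and its $3|\mathbb{E}''_0|/2$ estimate; what the paper's approach buys is the intermediate inequality $|S\cap\mathbb{E}|\ge\lceil n/2\rceil-\ell+1$ for every $\ell$, which is slightly more informative than the single numerical bound. Either proof is acceptable; yours is the one I would keep.
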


\begin{proof}
 Let $W_n=(V,\mathbb{E})$ be a wheel of order $n+1\geq 4$ with the vertex set $ V=\{v_{i}~|~ 0 \leq i \leq n \} $ and the edge set $\mathbb{E}=\{e_{0i},e_{i(i+1)}~|~1\leq i \leq n\}$. Since $S=\{v_{0}\}\cup \{v_{2i-1}~|~1\leq i \leq \lceil n/2\rceil\}$ is a TMDS of $W_n$, we have $\gamma_{tm}(W_n)\leq \lceil n/2 \rceil +1$. 

\vspace{0.2cm}

In the sequel, we show $\gamma_{tm}(W_n)\geq \lceil n/2 \rceil +1$. Let $S$ be an arbitrary TMDS of $W_{n}$. If $S\cap {\mathbb{E}}=\emptyset$, then since $N(e_{i(i+1)})\cap S \neq \emptyset$ for each $1\leq i\leq n$, $S\cap \{v_i,v_{i+1}\}\neq \emptyset$, and so $|S|\geq \lceil n/2\rceil$. Since  we have nothing to prove when $\{v_1,\cdots,v_n\}\subseteq S$, we assume $v_i\notin S$ for some $1\leq i\leq n$. This implies $v_0\in S$ (because $S$ dominates $e_{0i}$ and $S\cap \mathbb{E}=\emptyset$), and so $|S|\geq \lceil n/2\rceil+1$. Now let $S\cap V=\emptyset$. Then, for dominating every vertex $v_i\in V$ by $S$, there exists an edge $e_{pi}\in S$ for some $p\neq i$. By knowing $N_{W_n}(e_{pi})=\{v_p,v_i\}$, we conclude $S$ has cardinality at least $\lceil (n+1)/2\rceil$ which is $\lceil n/2\rceil+1$ for even $n$ and is $\lceil n/2\rceil$ for odd $n$. Since the subgraph of $W_n$ induced  by $S$ is connected and $S\subseteq \mathbb{E}$, we obtain $|\{v_i~|~e_{ij}\in S \mbox{ for some }j\}|<n+1$ if $|S|= \lceil (n+1)/2\rceil$ and $n$ is odd, a contradiction. Thus $|S|\geq \lceil (n+1)/2\rceil+1=\lceil n/2\rceil+1$ for odd $n$.

\vspace{0.2cm}

Thus $S\cap V\neq\emptyset$ and $S\cap \mathbb{E}\neq\emptyset$. By assumption $|S\cap V|=\ell$ it is sufficient to prove $|S\cap \mathbb{E}|\geq \lceil n/2\rceil-\ell+1$. Let $\mathbb{E}_{0}=\{e_{i(i+1)}~|~|\{v_{i},v_{i+1}\}\cap S|=0 \mbox{ for }1\leq i\leq n\}$. Since every $e_{i(i+1)} \in \mathbb{E}_{0}$ must be dominated by an edge $e_{pq}\in S$ in which $|\{p,q\}\cap \{i,i+1\}|=1$, the set $\mathbb{E}_{00}=\{e_{pq}\in S~|~e_{pq} \mbox{ dominates an edge } e_{i(i+1)} \in \mathbb{E}_{0} \}$ is not empty, and more $|\mathbb{E}_{00}|\geq \lceil |\mathbb{E}_{0}|/2\rceil $ because every $e_{pq}\in \mathbb{E}_{00}$ is adjacent to at most two edges in $\mathbb{E}_{0}$. Let $\mathbb{E}_{1}=\{e_{i(i+1)}~|~|\{v_{i},v_{i+1}\}\cap S|\neq 0 \mbox{ for }1\leq i\leq n\}$. If $v_{0}\in S$, then $|\mathbb{E}_1|\leq 2(\ell-1)$, and so $|\mathbb{E}_0|\geq n-2\ell+2$ which implies $|S\cap \mathbb{E}|\geq \lceil |\mathbb{E}_0|/2 \rceil = \lceil (n-2\ell+2)/2\rceil=\lceil n/2\rceil-\ell+1$, as desired. 

\vspace{0.2cm}

Therefore we may assume $v_{0}\notin S$. Then $|\mathbb{E}_1|\leq 2\ell$ and so $|\mathbb{E}_0|\geq n-2\ell$. Let $|\mathbb{E}_0|\leq n-2\ell+1$ by the contrary. So $2\ell-1 \leq |\mathbb{E}_1|\leq 2\ell$. Since by the assumption $|\mathbb{E}_1|=2\ell$ we reach to this contradiction that the subgraph of $W_n$ induced by $S\cap V$ contains $\ell$ isolate vertices, we may assume $|\mathbb{E}_1|=2\ell-1$. Again, since the subgraph of $W_n$ induced  by $S\cap V$ does not have isolate vertex, we must have $\ell=2$, and so $|\mathbb{E}_0|=n-2\ell+1=n-3$. Since obviousely $|S\cap \mathbb{E}| \geq \lceil |\mathbb{E}_0|/2 \rceil = \lceil n/2\rceil-\ell+1$ for even $n$, let $n$ be odd. Without loss of generality, we assume $S\cap V=\{v_1,v_2\}$, and so $\mathbb{E}_0=\{e_{i(i+1)}~|~ 3\leq i\leq n-1\}$. \\

By the contrary let $|S\cap \mathbb{E}| =(n-3)/2$. Since there is nothing to prove for $n=3$ by Proposition \ref{gamma_t T(K_n)}, we assume $n\geq 5$. For $n=5$, since $v_{4}$ does not dominated by $S-\mathbb{E}$, $|S\cap \mathbb{E}|=1$ implies $S\cap \mathbb{E}=\{e_{i4}\}$ for some $i\neq 4$, that is, $S=\{v_1,v_2,e_{i4}\}$. But since $e_{i4}$ does not dominated by $S$, we reach contradiction. Thus $|S\cap \mathbb{E}| >(n-3)/2=1$ and so $\gamma_{tm}(W_5)=\lceil 5/2 \rceil +1=4$. Therefore, in the sequel, we assume $n\geq 7$. If $S\cap \mathbb{E} \subseteq \mathbb{E}_0$, then $S\cap \mathbb{E}=\mathbb{E}_{0}-\{e_{(2i)(2i+1)}~|~ 2\leq i \leq (n-3)/2+1 \}$ or $S\cap \mathbb{E}=\{e_{2i(2i+1)}~|~ 2\leq i \leq  (n-3)/2 \} \cup \{\alpha\}$ where $\alpha\in \{e_{(n-2)(n-1)},e_{(n-1)n} \}$, which imply one of the edges $e_{0n}$ or $e_{03}$ does not respectively dominated by $S$, a contradiction. Thus $|S \cap \{e_{0i}~|~1\leq i \leq n\}|=m\geq 1$. Let also $V^{'}=V-N_G(\{v_1,v_2\})=\{v_{4},\ldots, v_{n-1}\}$, $\mathbb{E}^{'}_0=\{e_{i(i+1)} \in S\cap \mathbb{E}_0~|~ \{e_{0i}, e_{0(i+1)}\} \cap S \neq \emptyset\}$ and $\mathbb{E}^{''}_0=\mathbb{E}_0 - \mathbb{E}^{'}_0$. So 
\begin{equation}
\label{|S cap T_0|=(n-3)/2 - m}
|S\cap \mathbb{E}_0|=(n-3)/2 - m=|\mathbb{E}^{'}_0|+|\mathbb{E}''_0|.
\end{equation}
Since $N_{W_n}(e_{i(i+1)})=\{v_i,v_{i+1}\}$ for $e_{i(i+1)} \in \mathbb{E}'_0$ and $|N(e_{0j})\cap \{v_i,v_{i+1}\}|=1$ when $e_{0j}\in \{e_{0i},e_{0(i+1)}\} \subset S$, we have $ |N_{T(W_n)}(S\cap \mathbb{E}^{'}_0) \cap V^{'}|\leq m+|\mathbb{E}'_0|$. Since the subgraph $W_n[\mathbb{E}''_0]$ of $W_n$ induced by $\mathbb{E}''_0$ dominates the most number of vertices in $V'$ when it has as possible as the most number of the complete graphs $K_2$ as induced subgraphs, we conclude that at least two edges of $\mathbb{E}''_0$ are needed for dominating every three vertices of $V'$ by $\mathbb{E}''_0$, and so $|N_{T(W_n)}(S\cap \mathbb{E}''_0) \cap V^{'}| \leq 3|\mathbb{E}''_0|/2$. Hence 
\begin{equation}
\label{N(S cap T_0) cap V'| =< m+|T'_0|+3|T''_0|/2}
|N_{T(W_n)}(S\cap \mathbb{E}_0) \cap V^{'}| \leq m+|\mathbb{E}'_0|+3|\mathbb{E}''_0|/2.
\end{equation}
Now by knowing $V^{'} \subseteq N_{T(W_n)}(S\cap \mathbb{E}_0)$ which implies $|N_{T(W_n)}(S\cap \mathbb{E}_0)|\geq |V^{'}|=n-4$, and relations (\ref{|S cap T_0|=(n-3)/2 - m}) and (\ref{N(S cap T_0) cap V'| =< m+|T'_0|+3|T''_0|/2}), we obtain $|\mathbb{E}^{''}_0|\geq n-5$, and so $m= 1$.  Then $|S\cap \mathbb{E}_0|=(n-3)/2-1=(n-5)/2$. Thus the number of vertices of $V'$ dominated by $S\cap \mathbb{E}_0$ is at most $3(n-5)/4+1$ which is less than $n-4=|V'|$ when $n\geq 7$. Therefore  $|S\cap \mathbb{E}| \geq (n-3)/2+1=(n-1)/2=\lceil n/2 \rceil-\ell+1$, as desired. 

\vskip 0.1 true cm

The set of yellow points $\{v_0,v_1,v_3,v_5\}$ in Figure  \ref{fi:proofexample9} shows a min-TMDS of $W_5$.
\begin{figure}[ht]
\centerline{\includegraphics[width=4.5cm, height=3.5cm]{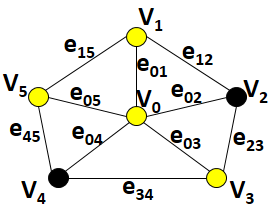}}
\vspace*{-0.3cm}
\caption{A min-TMDS of $W_5$.}\label{fi:proofexample9}
\end{figure}
\end{proof}

We know $\gamma_{tm}(G)> \gamma_t(G)$ for almost all graphs. As we saw in some graphs such as complete graphs and wheels, $ \gamma_{tm}(G)-\gamma_t(G)\rightarrow \infty$ when $n\rightarrow \infty$ for many graphs $G$. So, we end our paper with the following important problem. 

\begin{prob}
\label{gama_t  G =< gama_t T(G)}
Find some real number $\alpha > 1$ such that for any graph $G$, $\gamma_{tm}(G) \geq \alpha \gamma_t(G)$.
\end{prob}


\end{document}